\newtheorem{lmm}{Lemma}[section]
\newtheorem{prp}{Proposition}[section]
\newtheorem{thm}{Theorem}[section]
\theoremstyle{definition}
\newtheorem{dfn}{Definition}[section]
\theoremstyle{remark}
\newcommand{\dl}{\delta}
\newcommand{\eps}{\varepsilon}
\newcommand{\lm}{\lambda}
\newcommand{\sg}{\sigma}
\newcommand{\om}{\omega}
\newcommand{\Gm}{\Gamma}
\newcommand{\Om}{\Omega}
\newcommand{\D}{{\mathbb D}}
\newcommand{\N}{{\mathbb N}}
\newcommand{\Q}{{\mathbb Q}}
\newcommand{\R}{{\mathbb R}}
\newcommand{\cE}{{\mathcal E}}
\newcommand{\cF}{{\mathcal F}}
\newcommand{\la}{\langle}
\newcommand{\ra}{\rangle}
\newcommand{\cp}{\mathop{\rm Cap}\nolimits}
\title{Reflecting Ornstein-Uhlenbeck processes\\on pinned path spaces}
\author{Masanori \textsc{Hino}%
          \footnote{Graduate School of Informatics, Kyoto University, Kyoto 606-8501, Japan.}
\ and
Hiroto \textsc{Uchida}%
\footnote{Minato-ku, Tokyo 105-0004, Japan.}}
\keywords{{\em Key Words}\/: Ornstein-Uhlenbeck process, Dirichlet forms, BV function, Wiener space.}         %optional
\begin{document}
%
% The text goes here.  
% Be sure to use the appropriate "theorem-like" environment as 
% is the following examples.  Never use plain TeX commands for these, as
% they will cause interference with the styles of other papers. 

\maketitle

%\tableofcontents      %optional

\begin{abstract}     
Consider a set of continuous maps from the interval $[0,1]$ to a domain in ${\mathbb R}^d$. Although the topological boundary of this set in the path space is not smooth in general, by using the theory of functions of bounded variation (BV functions) on the Wiener space and the theory of Dirichlet forms, we can discuss the existence of the surface measure and the Skorokhod representation of the reflecting Ornstein-Uhlenbeck process associated with the canonical Dirichlet form on this set. 
\end{abstract}

\section{Introduction}
In \cite{Ha}, Hariya obtained an integration by parts formula on a subset of the pinned path space on $\R^d$, which is a partial generalization of the work by Zambotti~\cite{Za}. 
To state it more precisely, let $\Om$ be a bounded domain in $\R^d$. We assume that the boundary of $\Om$ is sufficiently smooth.
Take $a,b\in \Om$ and define the path spaces as follows:
\begin{align*}
W_{a,b}&=\{w\in C([0,1]\to\R^d)\mid w(0)=a,\ w(1)=b\},\\
W_{a,b}^\Om&=\{w\in C([0,1]\to \Om)\mid w(0)=a,\ w(1)=b\},\\
H_0&=\left\{h\in C([0,1]\to\R^d)\left|\, \begin{array}{l}h(0)=h(1)=0,\ \text{$h$ is absolutely continuous}\\
\text{and }\int_0^1 |\dot h(s)|_{\R^d}^2\,ds<\infty\end{array}\right.\right\}.
\end{align*}
We regard $W_{a,b}^\Om$ as a subset of $W_{a,b}$.
The topological boundary $\partial W_{a,b}^\Om$ of $W_{a,b}^\Om$ with respect to the uniform topology is given by
\[
  \partial W_{a,b}^\Om=\left\{w\in W_{a,b}\left|
  \begin{array}{l}
  w(t)\in\overline{\Om}\text{ for every $t\in[0,1]$ and}\\
  w(s)\in\partial \Om\text{ for some $s\in(0,1)$}
  \end{array}
  \right.\right\},
\]
where $\overline{\Om}$ and $\partial \Om$ denote the closure and the boundary of $\Om$ in $\R^d$, respectively.
We define a subset $\partial' W_{a,b}^\Om$ of $\partial W_{a,b}^\Om$ by
\[
\partial' W_{a,b}^\Om=\{w\in \partial W_{a,b}^\Om\mid{}\text{there exists a unique }s\in(0,1) \text{ such that } w(s)\in\partial\Om\}.
\]
Let $\mu_{a,b}$ be the pinned Wiener measure on $W_{a,b}$.
For a smooth cylindrical function $F$ on $W_{a,b}$ and $h\in H_0$, Hariya~\cite{Ha} proved the identity
\begin{equation}\label{eq:ibp}
  \int_{W_{a,b}^\Om} \partial_h F(w)\,\mu_{a,b}(dw)
  =\int_{W_{a,b}^\Om}F(w)\la h,w\ra\,\mu_{a,b}(dw)
  + ({\rm BC}),
\end{equation}
where $\partial_h$ is the partial derivative along the direction of $h$, $\la h,w\ra$ denotes the Wiener integral $\int_0^1 h(s)\,dw(s)$, and (BC) is the ``boundary contribution,'' which is expressed as an integral over $\partial W_{a,b}^\Om$.
The explicit expression of (BC) is provided in \cite{Ha}.
In this study, we provide only the following remarks on (BC).
\begin{enumerate}
\item[(a)] The mass of the measure on $\partial W_{a,b}^\Om$ appearing in the integral representation of (BC) concentrates on $\partial' W_{a,b}^\Om$.
\item[(b)] The integrand in (BC) contains the normal derivatives of the heat kernel density on $\Om$ at $\partial \Om$ with the Dirichlet boundary condition.
\end{enumerate}
The integration by parts formula \eqref{eq:ibp} implies that the indicator function $1_{W_{a,b}^\Om}$ of $W_{a,b}^\Om$ is a BV function, and we can construct the reflecting Ornstein-Uhlenbeck process on $W_{a,b}^\Om$ with the Skorokhod representation (cf.\ Section~2 below).
On the other hand, property (b) above imposes on the strong regularity of $\Om$ since we cannot expect the normal derivatives of the heat kernel density to exist at the boundary if $\partial \Om$ is not very smooth.
%(According to \cite{Hs}, if $\partial \Om$ is in the $C^3$-class, all the assumptions in \cite{Ha} for \eqref{eq:ibp} to hold are satisfied.)
If we are only interested in the probabilistic aspect, it is sufficient to prove that $1_{W_{a,b}^\Om}$ is a BV function; in other words, even if we do not know the explicit expression of (BC), only proving the validity of the integration by parts is sufficient.
This is expected to be done under a milder assumption on $\Om$ since such a claim can be proved only by a series of inequalities and not by equalities.
This is the objective of this paper.

In this paper, we introduce the concept of the {\em uniform exterior ball condition} for $\Om$, which allows some singularity at $\partial\Om$, and prove that $1_{W_{a,b}^\Om}$ is a BV function under such a condition.
Based on this, we can construct the reflecting Ornstein-Uhlenbeck process on the closure of $W_{a,b}^\Om$ and prove its Skorokhod representation. 
Further, we prove that the mass of the measure on $\partial W_{a,b}^\Om$ appearing in the Skorokhod representation concentrates on $\partial'W_{a,b}^\Om$, which is consistent with  property~(a) above.
The proof is based on the quantitative estimates of Brownian motion on $\R^d$, and the method is different from that used in \cite{Ha}. 
We expect that our method is sufficiently flexible to discuss more general situations.

This paper is organized as follows. In Section~2, we provide a framework and state the main theorem.
Some key estimates for the Brownian motion on $\R^d$ are proved in Section~3. These estimates are obtained by reducing them to a few detailed estimates of a one-dimensional Brownian motion with a constant drift.
The main theorem is proved in Section~4.
In the last section, we provide a few remarks.

\section{Framework and the main result}
First, we recall the concept of the BV functions on the Wiener space, according to \cite{FH}.
Let $(E,H,\mu)$ be an abstract Wiener space,
that is, $E$ is a separable Banach space, $H$ is a separable Hilbert
space densely and continuously embedded in $E$, and $\mu$ is a
Gaussian measure on $E$ that satisfies the condition
\[
  \int_E \exp\left(\sqrt{-1}\,l(z)\right)\mu(dz)=\exp\left(-|l|_H^2/2\right),
  \quad l\in E^*.
\]
Here, $^*$ denotes the topological dual and we use natural inclusions and an identification $E^*\subset H^*\cong H\subset E$.
When $M$ is a separable Hilbert space, $L^p(E\to M)$ denotes the $L^p$-space on $E$ with respect to $\mu$ which consists of $M$-valued functions.
When $M=\R$, we omit $M$ from the notation.
Let $C_b^1(\R^m)$ be the set of all bounded continuous functions $f$ on $\R^m$ such that all the first-order partial derivatives of $f$ are bounded and continuous.
Define
\begin{align*}
\cF C_b^1&=\left\{ u\colon E\to \R\left|\,
\begin{array}{l}
u(z)=f(l_1(z),\ldots,l_m(z)),\ 
l_1,\ldots,l_m\in E^*,\\
f\in C_b^1(\R^m)\text{ for some $m\in\N$}
\end{array}
\right.\right\},\\
(\cF C_b^1)_{E^*}&=\left\{ G\colon E\to E^*\left|\,
\begin{array}{l}
G(z)=\sum_{i=1}^m u_i(z)l_i,\ 
l_1,\ldots,l_m\in E^*,\\
u_1,\ldots,u_m\in \cF C_b^1\text{ for some $m\in\N$}
\end{array}
\right.\right\}.
\end{align*}
For $u\in\cF C_b^1$, an $H$-valued function $\nabla u$ on $E$ is given by the following identity:
\[
\la \nabla u(z),l\ra_H=\lim_{\eps\to 0}(u(z+\eps l)-u(z))/\eps,
\quad l\in E^*\subset H,\ z\in E.
\]
Let $\nabla^*$ be a (formal) adjoint operator of $\nabla$, which is defined by the following relation:
\[
\la \nabla^* G,u\ra_{L^2(E)}=\la G,\nabla u\ra_{L^2(E\to H)},
\quad u\in \cF C_b^1.
\]
We define the set of BV functions on $E$ as
\[
BV(E)=\left\{\rho\colon E\to \R\left|\,
\begin{array}{l}
\int_E |\rho|\max\{0,\log |\rho|\}^{1/2}\,d\mu<\infty \text{ and there exists}\\
C\ge0\text{ such that }
\left|\int_E (\nabla^* G)\rho\,d\mu\right|
\le C\| |G|_H\|_{L^\infty(E)}\\
\text{for all }G\in (\cF C_b^1)_{E^*}
\end{array}
\right.\!\right\}.
\]
We shall now revisit several properties of BV functions on $E$.
\begin{thm}[({\cite[Theorems~3.7, 3.9]{FH}})] \label{th:1}
For $\rho\in L^1(E)$, the following are equivalent conditions.
\begin{enumerate}
\item $\rho\in BV(E)$.
\item There exists a sequence $\{\rho_n\}$ in $\D^{1,1}:=\overline{\cF C_b^1}^{\|\nabla\cdot\|_{L^1(E\to H)}+\|\cdot\|_{L^1(E)}}$ such that $\rho_n$ converges to $\rho$ in $L^1(E)$ and $\|\nabla \rho_n\|_{L^1(E\to H)}$ is bounded in $n$.
\item (Integration by parts formula) There exist, a finite Borel measure $\nu$ and an $H$-valued function $\sg$ on $E$ such that $|\sg|_H=1$ $\nu$-a.e.\ and
\[
  \int_E (\nabla^* G)\rho\,d\mu=\int_E \la G,\sg\ra_H\,d\nu,\quad
  G\in (\cF C_b^1)_{E^*}.
\]
\end{enumerate}
\end{thm}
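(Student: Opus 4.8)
The three characterisations are linked by the total variation functional
\[
V(\rho):=\sup\left\{\int_E(\nabla^*G)\rho\,d\mu \;\Big|\; G\in(\cF C_b^1)_{E^*},\ \||G|_H\|_{L^\infty(E)}\le1\right\},
\]
and the plan is to prove the cycle $(1)\Rightarrow(2)\Rightarrow(3)\Rightarrow(1)$. The tool that makes the first implication clean is the Ornstein--Uhlenbeck (Mehler) semigroup $T_t$, which regularises $\rho$ while respecting the gradient bound. Recall the commutation relations $\nabla T_t=e^{-t}T_t\nabla$ and its dual $T_t\nabla^*=e^{-t}\nabla^*T_t$.

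For $(1)\Rightarrow(2)$ I would set $\rho_n=T_{1/n}\rho$, using that $T_t\rho\to\rho$ in $L^1(E)$ and $T_t\rho\in\D^{1,1}$. For any $G$ with $\||G|_H\|_{L^\infty(E)}\le1$,
\[
\int_E\la\nabla T_t\rho,G\ra_H\,d\mu=\int_E(\nabla^*G)\,T_t\rho\,d\mu=e^{-t}\int_E\nabla^*(T_tG)\,\rho\,d\mu ,
\]
where the last equality moves the self-adjoint $T_t$ onto $\nabla^*G$ and applies the dual commutation. Since $T_t$ is an $H$-contraction with $|T_tG|_H\le T_t|G|_H\le1$ and $T_tG$ is smooth, the defining inequality of $BV(E)$ (extended to such $G$ by approximating $T_tG$ with cylindrical fields) bounds this by $e^{-t}C$. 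Taking the supremum over $G$ gives $\|\nabla\rho_n\|_{L^1(E\to H)}\le C$, which is (2); the only delicate point is the admissibility of substituting $T_tG$ into the $BV$ inequality.

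For $(2)\Rightarrow(3)$ I would regard $\nabla\rho_n\,d\mu$ as $H$-valued finite measures of uniformly bounded total variation and study $\Phi_n(G)=\int_E\la\nabla\rho_n,G\ra_H\,d\mu=\int_E(\nabla^*G)\rho_n\,d\mu$. For fixed cylindrical $G$ the divergence $\nabla^*G$ lies in the Orlicz space $\exp L^2$, while the Gaussian logarithmic Sobolev (Ledoux) inequality applied to each $\rho_n$ yields a uniform bound in $L\log^{1/2}L$; Orlicz--H\"older then makes $\{(\nabla^*G)\rho_n\}$ uniformly integrable, so $\Phi_n(G)\to\int_E(\nabla^*G)\rho\,d\mu=:\Phi(G)$ with $|\Phi(G)|\le C\||G|_H\|_{L^\infty(E)}$. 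Finally, $(3)\Rightarrow(1)$ is the easy direction: $V(\rho)=\sup\int_E\la G,\sg\ra_H\,d\nu\le\nu(E)<\infty$ supplies the bound with $C=\nu(E)$, and the $L\log^{1/2}L$ integrability follows by applying the same Gaussian embedding inequality to $T_t\rho$ and letting $t\to0$ with Fatou's lemma.

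The main obstacle is the representation step inside $(2)\Rightarrow(3)$: one must promote the bounded functional $\Phi$ to a genuine finite Borel measure $\nu$ together with a density $\sg$ satisfying $|\sg|_H=1$ $\nu$-a.e. Because $E$ is infinite-dimensional and hence not locally compact, the Riesz representation theorem does not apply, and a Hahn--Banach extension of $\Phi$ to $C_b(E\to H)$ yields, a priori, only a finitely additive set function. To secure countable additivity I would exploit the Radon (tightness) property of $\mu$: pass to the finite-dimensional marginals determined by the defining functionals $l_1,\dots,l_m$, invoke the classical De Giorgi--Federer structure theorem for finite-dimensional $BV$ functions to obtain representing measures there, verify that these form a consistent projective family of uniformly bounded mass, and assemble $\nu$ as a projective limit, with tightness inherited from $\mu$ ensuring $\sigma$-additivity. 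The density $\sg$ is then the Radon--Nikodym derivative of the limiting $H$-valued measure with respect to its total variation $\nu$. I expect this Riesz-type construction in the non-locally-compact setting, rather than the semigroup estimates, to be the technical heart of the argument.
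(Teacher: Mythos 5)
A preliminary remark: the paper itself contains no proof of this theorem --- it is imported verbatim from \cite[Theorems~3.7, 3.9]{FH} --- so your attempt can only be compared against the argument in that reference. At that level, much of your outline agrees with the standard development: mollification by the Ornstein--Uhlenbeck semigroup with the commutation relation for $(1)\Rightarrow(2)$ (including the reduction $|T_tG|_H\le T_t|G|_H\le 1$ and the need to approximate $T_tG$ by cylindrical fields, which you correctly flag), the Orlicz duality between $L\log^{1/2}L$ and $\exp L^2$ for passing to the limit in the pairing, and Ledoux's Gaussian $L^1$-embedding to recover the entropy integrability. One caution, though, already in $(1)\Rightarrow(2)$: over a Gaussian measure $\nabla T_t$ is \emph{not} bounded from $L^1(E)$ to $L^1(E\to H)$, so the bare assertion ``$T_t\rho\in\D^{1,1}$'' is false for general $\rho\in L^1$; it is precisely the condition $\int_E|\rho|\max\{0,\log|\rho|\}^{1/2}\,d\mu<\infty$ built into the definition of $BV(E)$ that makes $T_t$ map into $\D^{1,1}$, and you never invoke it there. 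The same issue recurs in $(3)\Rightarrow(1)$, where your Fatou argument presupposes $T_t\rho\in\D^{1,1}$ \emph{before} the entropy bound is known; there one must first obtain $\nabla T_t\rho\in L^1(E\to H)$ directly, e.g.\ from the Mehler formula combined with the duality bound $\|\nabla T_t\rho\|_{L^1(E\to H)}\le e^{-t}\nu(E)$, and only then apply the embedding.

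The step that fails as literally written is your projective-limit construction of $\nu$. Taking $\rho_n=\mathbb{E}[\rho\mid\sg(l_1,\dots,l_n)]$, the finite-dimensional total-variation measures supplied by the De Giorgi--Federer theory do \emph{not} form a consistent family: conditioning contracts gradients, $|\mathbb{E}[\nabla\rho_{n+1}\mid\sg(l_1,\dots,l_n)]|_H\le \mathbb{E}[|\nabla\rho_{n+1}|_H\mid\sg(l_1,\dots,l_n)]$, so the masses are merely nondecreasing in $n$ and the push-forward of the $(n+1)$-st measure does not equal the $n$-th; Kolmogorov's extension theorem therefore has no purchase. What replaces it is either a tightness/weak-compactness argument for the $H$-valued measures $\nabla\rho_n\,d\mu$, using the uniform mass bound $V(\rho)$ and lower semicontinuity of the variation to identify the limit and to secure $|\sg|_H=1$ $\nu$-a.e.\ (weak limits can a priori lose mass, which your Radon--Nikodym remark glosses over), or a Daniell-type integral representation of the positive functional on a vector lattice of quasi-continuous functions --- the latter being exactly the content of \cite{Hi}, cited in the concluding remarks of this paper. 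Note also that \cite{FH} prove in addition that $\nu$ charges no set of zero capacity, a fact this paper relies on when invoking the Revuz correspondence. In short: the semigroup and embedding components of your sketch match the cited proof, but the mechanism you propose at what you yourself identify as the technical heart --- exact consistency plus projective limit --- is not available and must be replaced by compactness or lattice-representation arguments.
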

\begin{thm}[({\cite[Theorem~4.2]{FH}})] \label{th:2}
Let $\rho\in BV(E)$ and assume $\rho\ge0$ $\mu$-a.e.
Let $S$ be the support of the measure $\rho\,d\mu$.
Define a bilinear form on $L^2(S,\rho\,d\mu)$ by
\[
  \cE(f,g)=\frac12\int_S \la \nabla f,\nabla g\ra_H \rho\,d\mu,\quad
  f,g\in\cF C_b^1.
\]
Assume that $(\cE,\cF C_b^1)$ is closable on $L^2(S,\rho\,d\mu)$.
Then, its closure $(\cE,\cF)$ is a quasi-regular, local, and conservative Dirichlet form on $L^2(S,\rho\,d\mu)$, and the following Skorokhod representation holds:
\begin{equation}\label{eq:skorohod}
X_t=X_0+B_t-\frac12\int_0^t X_s\,ds+\frac12\int_0^t \sigma(X_s)\,dA_s,\quad
t\ge0,\ P_w\text{-a.e.\ for q.e.\ $w$}.
\end{equation}
Here, $(X_t,P_w)$ is a diffusion process on $S$ associated with $(\cE,\cF)$, $\{B_t\}$ is an $E$-valued Brownian motion starting at $0$, $\{A_t\}$ is an additive functional in Revuz correspondence with $\nu$, and $\nu$ and $\sg$ are provided in Theorem~$\ref{th:1}$~$(3)$.
\end{thm}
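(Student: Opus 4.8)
The plan is to first establish the four structural properties and then read off the Skorokhod representation from a Fukushima-type decomposition. Since closability is assumed, $(\cE,\cF)$ is already a closed symmetric form, so the only remaining Dirichlet-form axiom is the Markovian property. I would derive this from the derivation rule for $\nabla$: for a normal contraction $\phi$ and $u\in\cF C_b^1$ one has $|\nabla(\phi\circ u)|_H=|\phi'(u)|\,|\nabla u|_H\le|\nabla u|_H$ pointwise, so $\cE(\phi\circ u)\le\cE(u)$, and this inequality persists under the $(\cF,\cE_1)$-closure. Locality follows from the same identity, since $\la\nabla f,\nabla g\ra_H$ vanishes wherever one of $f,g$ is locally constant. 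Conservativeness is immediate because $1\in\cF C_b^1$ with $\nabla 1=0$, whence $\cE(1,1)=0$, and $\rho\,d\mu$ is a finite measure on $S$.

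For quasi-regularity I would verify the Ma--R\"ockner conditions. As $\rho\,d\mu$ is a finite Borel measure on the Polish space $E$ it is tight, and combined with the Ornstein--Uhlenbeck structure of the reference form this yields an $\cE$-nest of compact sets. Choosing a countable family $\{l_n\}\subset E^*$ that separates the points of $E$, each $l_n$ is continuous, hence quasi-continuous, and composing with functions of $C_b^1(\R)$ places suitable truncations into $\cF C_b^1$; the family $\{l_n\}$ then separates points off an exceptional set. Together with the density of $\cF C_b^1$ in $(\cF,\cE_1)$, this gives quasi-regularity.

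The core is the Skorokhod representation, which I would obtain from the Fukushima decomposition of the coordinate functionals. For $l\in E^*\subset\cF$ the additive functional $l(X_t)-l(X_0)$ decomposes as $M_t^{[l]}+N_t^{[l]}$, with $M^{[l]}$ a martingale additive functional of finite energy and $N^{[l]}$ of zero energy. Since $\nabla l=l$, the energy measure of $l$ is $|l|_H^2\,\rho\,d\mu$, so $\la M^{[l]}\ra_t=|l|_H^2\,t$ and, by polarization, $\la M^{[l]},M^{[l']}\ra_t=\la l,l'\ra_H\,t$; the L\'evy characterization then identifies the family $\{M^{[l]}\}$ with $\{l(B_t)\}$ for an $E$-valued Brownian motion $B$. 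To compute $N^{[l]}$ I would insert $G=\phi\,l$, $\phi\in\cF C_b^1$, into the integration by parts formula of Theorem~\ref{th:1}~$(3)$; using $\nabla^*(\phi\,l)=\phi\,l(z)-\la\nabla\phi,l\ra_H$ this gives
\[
\cE(\phi,l)=\frac12\int_S\phi\,l(z)\,\rho\,d\mu-\frac12\int_S\phi\,\la l,\sg\ra_H\,d\nu,
\]
so the action of the generator on $l$ is $-\tfrac12 l$ plus a singular contribution carried by $\nu$. Passing through the Revuz correspondence yields $N_t^{[l]}=-\tfrac12\int_0^t l(X_s)\,ds+\tfrac12\int_0^t\la l,\sg(X_s)\ra_H\,dA_s$, and letting $l$ range over the separating family assembles \eqref{eq:skorohod}.

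The hardest part will be the handling of the singular measure $\nu$. Before the Revuz correspondence can be invoked one must show that $\nu$ is a smooth measure charging no set of zero capacity, so that it corresponds to a genuine positive continuous additive functional $A$; this is exactly where the BV hypothesis through Theorem~\ref{th:1} and the quasi-regularity established above enter. A second subtlety is promoting the scalar identities, each valid for a fixed $l\in E^*$, to a single $E$-valued equation holding $P_w$-a.e.\ for quasi-every $w$, which requires controlling the exceptional sets simultaneously over a countable separating family and exploiting the path continuity of $X$ and $B$.
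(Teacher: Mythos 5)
The paper does not prove this theorem at all---it is imported verbatim from \cite[Theorem~4.2]{FH} (building on \cite{Fu}), and your outline reproduces essentially the proof strategy of that cited source: Markovianity and locality from the chain rule for $\nabla$, conservativeness from $1\in\cF C_b^1$, Ma--R\"ockner quasi-regularity via tightness and a countable separating family in $E^*$, then the Fukushima decomposition of the coordinate functionals $l\in E^*$, with L\'evy's characterization identifying $\{M^{[l]}\}$ as an $E$-valued Brownian motion and the integration-by-parts formula of Theorem~\ref{th:1}~(3) plus the Revuz correspondence (smoothness of $\nu$ being exactly \cite[Theorem~3.9]{FH}, as the paper notes right after the statement) identifying $N^{[l]}$, assembled over the countable family into \eqref{eq:skorohod}. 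The only cosmetic slip is writing $|\nabla(\phi\circ u)|_H=|\phi'(u)|\,|\nabla u|_H$ for a general normal contraction $\phi$, which need not be differentiable; one runs the argument with smooth approximating contractions mapping $\cF C_b^1$ into itself, a standard fix that does not affect the proof.
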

Note that $\nu$ above is smooth with respect to the $(\cE,\cF)$ from \cite[Theorem~3.9]{FH}, which justifies the consideration of the Revuz correspondence of $\nu$.
When $\rho$ is an indicator function, we term $\{X_t\}$ a reflecting Ornstein-Uhlenbeck process on $S$.
In such a case, the measure $\nu$ can be regarded as a surface measure of $S$.
\begin{thm}[({\cite[Theorem~3.15]{FH}})]\label{th:3}
Under the conditions described in Theorem~$\ref{th:2}$, if moreover $\rho$ is an indicator function of a set $U$, then the mass of $\nu$ concentrates on the topological boundary of $U$.
\end{thm}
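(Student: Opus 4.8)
The plan is to prove that $\nu(E\setminus\partial U)=0$. Since $\partial U=\overline U\setminus U^\circ$, the complement decomposes as the disjoint union $E\setminus\partial U=(E\setminus\overline U)\cup U^\circ$, so it suffices to show that $\nu$ charges neither the exterior $E\setminus\overline U$ nor the interior $U^\circ$. First I would record a symmetry that halves the work: since the constant function $1$ lies in $\cF C_b^1$ with $\nabla 1=0$, we have $\int_E\nabla^*G\,d\mu=0$, hence $1_{U^c}=1-1_U\in BV(E)$ carries the same measure $\nu$ (with vector field $-\sg$) by Theorem~\ref{th:1}(3). As $E\setminus\overline{U^c}=U^\circ$, the interior estimate for $U$ is exactly the exterior estimate for $U^c$. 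Thus everything reduces to the single claim: \emph{if $\rho=1_U\in BV(E)$ with associated $(\nu,\sg)$, then $\nu(E\setminus\overline U)=0$.}

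To prove this claim I would localize by a Lipschitz cutoff built from a distance function, which is the device that replaces the compactly supported test fields available in finite dimensions. Let $d(z)=\operatorname{dist}_E(z,\overline U)$; it is continuous and $1$-Lipschitz for $|\cdot|_E$, hence $H$-Lipschitz because $H\hookrightarrow E$ continuously, so $d\in\D^{1,1}$ with $|\nabla d|_H$ essentially bounded. For $n\in\N$ choose $\eta_n\in C^1(\R)$ with $0\le\eta_n\le1$, $\eta_n\equiv0$ on $(-\infty,1/(2n)]$ and $\eta_n\equiv1$ on $[1/n,\infty)$, and set $\chi_n=\eta_n\circ d$. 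Then $\chi_n$ is bounded and $H$-Lipschitz, $\chi_n\equiv1$ on $A_n:=\{d\ge1/n\}$, and both $\chi_n=0$ and $\nabla\chi_n=0$ hold $\mu$-a.e.\ on $\{d\le1/(2n)\}\supseteq\overline U\supseteq U$; moreover $E\setminus\overline U=\bigcup_nA_n$ increasingly.

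The core computation feeds $\chi_nG$ into the integration by parts formula. Granting the extended identity
\[
\int_E(\nabla^*(\chi_nG))\rho\,d\mu=\int_E\chi_n\la G,\sg\ra_H\,d\nu,\qquad \nabla^*(\chi_nG)=\chi_n\,\nabla^*G-\la\nabla\chi_n,G\ra_H,
\]
valid for every $G\in(\cF C_b^1)_{E^*}$, the left-hand side reduces to $\int_U(\chi_n\nabla^*G-\la\nabla\chi_n,G\ra_H)\,d\mu$, which vanishes since $\chi_n$ and $\nabla\chi_n$ are zero $\mu$-a.e.\ on $U=\{\rho=1\}$. Hence $\int_E\chi_n\la G,\sg\ra_H\,d\nu=0$ for all such $G$. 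Because $|\sg|_H=1$ $\nu$-a.e.\ and $\nu$ is finite we have $\sg\in L^1(\nu;H)$, and since $(\cF C_b^1)_{E^*}$ is dense in $L^1(\nu;H)$ I would pick $G_k\to\sg$ there to conclude $\int_E\chi_n|\sg|_H^2\,d\nu=0$, whence $\int_E\chi_n\,d\nu=0$ as $|\sg|_H=1$ $\nu$-a.e. Therefore $\nu(A_n)=0$, and letting $n\to\infty$ gives $\nu(E\setminus\overline U)=0$, which is the claim.

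The main obstacle is the extended integration by parts identity with the non-cylindrical multiplier $\chi_n$; the rest is bookkeeping. I would justify it by approximating $\chi_n$ in $\D^{1,1}$ by $\phi_j\in\cF C_b^1$ with $\phi_j\to\chi_n$ and $\nabla\phi_j\to\nabla\chi_n$ in $L^2(E)$ and uniformly bounded, applying Theorem~\ref{th:1}(3) to $\phi_jG\in(\cF C_b^1)_{E^*}$, and passing to the limit using the finiteness of $\nu$ and the boundedness of $\nabla^*G$ and $|G|_H$. Two standard facts enter here and would be stated as lemmas: that an $H$-Lipschitz function belongs to $\D^{1,1}$ and obeys the chain rule $\nabla(\eta_n\circ d)=\eta_n'(d)\,\nabla d$ together with the locality $\nabla\chi_n=0$ $\mu$-a.e.\ on $\{\chi_n\equiv0\}$, and that cylindrical vector fields are dense in $L^1(\nu;H)$. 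I expect the approximation and the verification that $\chi_n$ is an admissible multiplier to be the only genuinely delicate point.
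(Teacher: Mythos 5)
Your skeleton is sound, and since the paper imports Theorem~\ref{th:3} from \cite{FH} without proof, the natural benchmark is the localization argument behind it, which your proposal essentially reproduces: the reduction to the single claim $\nu(E\setminus\overline U)=0$ via the symmetry $1_{U^c}=1-1_U$ (correct: $1\in\cF C_b^1$ gives $\int_E\nabla^*G\,d\mu=0$, so $(\nu,-\sg)$ serves $U^c$, and no uniqueness of the pair is needed), the distance cutoff $\chi_n=\eta_n\circ d$ with $\chi_n\in\D^{1,2}$ by the Enchev--Stroock theorem, the derivation identity $\nabla^*(\phi G)=\phi\,\nabla^*G-\la\nabla\phi,G\ra_H$, and the $L^1(\nu;H)$-density of $(\cF C_b^1)_{E^*}$ are all fine. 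But there is one genuine gap, located exactly where you predicted: the passage to the limit on the $\nu$-side, $\int_E\phi_j\la G,\sg\ra_H\,d\nu\to\int_E\chi_n\la G,\sg\ra_H\,d\nu$. You justify it by ``the finiteness of $\nu$ and the boundedness of $\nabla^*G$ and $|G|_H$,'' but your $\phi_j$ converge to $\chi_n$ only in $L^2(\mu)$ (with gradients in $L^2(\mu;H)$), and $\mu$-convergence gives no information whatsoever $\nu$-almost everywhere: $\nu$ is in general \emph{singular} with respect to $\mu$ --- it is a surface measure, and in the very situation of this paper $\mu_{a,b}\bigl[\partial W_{a,b}^\Om\bigr]=0$ while $\nu$ is carried by the boundary. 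A $\mu$-a.e.\ convergent subsequence may behave arbitrarily on the carrier of $\nu$, so finiteness of $\nu$ plus uniform bounds cannot close this step. (A minor additional slip: $\nabla^*G$ is \emph{not} bounded --- for $G=\sum_i u_il_i$ it contains the unbounded Gaussian terms $u_i\,l_i(\cdot)$ --- though $\nabla^*G\in L^2(\mu)$ suffices for the $\mu$-side of your limit.)

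The repair needs the one ingredient you never invoke: the smoothness of $\nu$, i.e.\ that $\nu$ charges no set of zero capacity, which the paper records immediately after Theorem~\ref{th:2} (from \cite[Theorem~3.9]{FH}). Take $\phi_j\to\chi_n$ in $\D^{1,2}$ (possible since $\chi_n\in\D^{1,2}$, with a smooth truncation to keep $\sup_j\|\phi_j\|_\infty<\infty$). Since $\rho=1_U\le1$, one has $\cE(f,f)\le\frac12\int_E|\nabla f|_H^2\,d\mu$ for $f\in\cF C_b^1$, so $\phi_j$ is also Cauchy in $(\cE,\cF)$; by standard Dirichlet form theory a subsequence of quasi-continuous versions converges quasi-uniformly, hence quasi-everywhere, and the limit coincides q.e.\ with $\chi_n$ itself, because $\chi_n$ is $E$-continuous and therefore quasi-continuous. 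Smoothness of $\nu$ upgrades q.e.\ convergence to $\nu$-a.e.\ convergence, and then bounded convergence (using $|\la G,\sg\ra_H|\le\||G|_H\|_{L^\infty}$ and $\nu(E)<\infty$) yields $\int_E\chi_n\la G,\sg\ra_H\,d\nu=0$. With this inserted, the remainder of your argument --- the chain-rule locality $\nabla\chi_n=\eta_n'(d)\nabla d=0$ $\mu$-a.e.\ on $\{d\le1/(2n)\}\supseteq U$, the vanishing of the $\mu$-side, the approximation $G_k\to\sg$ in $L^1(\nu;H)$, and the monotone limit $\nu(A_n)\uparrow\nu(E\setminus\overline U)$ --- goes through as written.
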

We remark that the original assertion of Theorem~3.15 in \cite{FH} provides more detailed information on the support of $\nu$.

For $t>0$, $x,y\in\R^d$, we define 
\[
p_t(x,y)=(2\pi t)^{-d/2}\exp\left(-\frac{|x-y|_{\R^d}^2}{2t}\right).
\]
Fix $a,b\in\R^d$, and let $W_{a,b}$ and $H_0$ as defined in Section~1.
The pinned Wiener measure $\mu_{a,b}$ on $W_{a,b}$ is a Borel probability measure such that for $0=t_0<t_1<\cdots<t_{N+1}=1$ and Borel sets $A_1,\ldots,A_N$ of $\R^d$,
\begin{align*}
&\mu_{a,b}[w\in W_{a,b}\mid  w_{t_i}-w_{t_{i-1}}\in A_i,\ i=1,\ldots,N]\\
&=p_1(a,b)^{-1}\int\!\cdots\!\int_{A_1\times \dots\times A_N}\prod_{i=1}^{N+1}p_{t_i-t_{i-1}}(x_{i-1},x_i)\,dx_1\cdots dx_N,
\end{align*}
where $x_0=a$ and $x_{N+1}=b$.
Then, $(W_{0,0},H_0,\mu_{0,0})$ is an abstract Wiener space.
When $(a,b)\ne (0,0)$, $W_{a,b}$ is not a linear space.
However, $W_{a,b}$ is isomorphic to $W_{0,0}$ as an  affine space according to the shift map \[\lm_{a,b}\colon W_{a,b}\ni w\mapsto w-h_{a,b}\in W_{0,0},\] where $h_{a,b}(t)=a+(b-a)t$, $t\in[0,1]$, and $(W_{a,b},\mu_{a,b})$ is isomorphic to $(W_{0,0},\mu_{0,0})$ as a measure space according to the map $\lm_{a,b}$.
Therefore, by pushing everything forward to $(W_{0,0},\mu_{0,0})$, we can define the concepts of $\cF C_b^1$, $\nabla$, the BV space $BV(W_{a,b})$ etc., on $(W_{a,b},H_0,\mu_{a,b})$.
Furthermore, Theorems~\ref{th:1}, \ref{th:2}, and \ref{th:3} are valid on this space by appropriate modification.

Let $\Om$ be a domain of $\R^d$.
We do not assume that $\Om$ is bounded, but assume $\Om\ne \R^d$.
For $x\in \R^d$ and $r>0$, $B(x,r)$ denotes the closed ball in $\R^d$ with center $x$ and radius $r$.
\begin{dfn}\label{def:uebc}
We state that $\Om$ satisfies the {\em uniform exterior ball condition} if there exists $\dl>0$ such that for every $y\in\partial \Om$, there exists $z\in\R^d\setminus \Om$ satisfying ${B}(z,\dl)\cap \overline{\Om}=\{y\}$. 
\end{dfn}
For example, bounded domains with boundaries in the $C^2$-class and convex domains satisfy the uniform exterior ball conditions. 
It may be said that this condition allows outward cusps, but not inward cusps.

We consider $W_{a,b}^\Om$, $\partial W_{a,b}^\Om$, and $\partial'W_{a,b}^\Om$ as defined in Section~1.
Let $\overline{W_{a,b}^\Om}=W_{a,b}^\Om\cup\partial W_{a,b}^\Om$.
The main theorem in this paper is as follows.
\begin{thm}\label{th:main}
Assume that $\Om$ satisfies the uniform exterior ball condition.
Then, $1_{\overline{W_{a,b}^\Om}}\in BV(W_{a,b})$.
Further, the bilinear form $(\cE',\cF C_b^1)$ on $L^2(\overline{W_{a,b}^\Om},\mu_{a,b}|_{\overline{W_{a,b}^\Om}})$ defined by 
\[
\cE'(f,g)=\frac12\int_{\overline{W_{a,b}^\Om}}\la \nabla f,\nabla g\ra_{H_0}\,d\mu_{a,b},
\quad f,g\in\cF C_b^1
\]
 is closable, and its closure $(\cE',\cF')$ is a quasi-regular, local, and conservative Dirichlet form.
Moreover, when $(X'_t,P'_w)$ denotes the diffusion process on $\overline{W_{a,b}^\Om}$ associated with $(\cE',\cF')$,
$(X_t,P_w):=(\lm_{a,b}(X'_t),P'_{\lm_{a,b}^{-1}(w)}\circ\lm_{a,b}^{-1})$ satisfies the Skorokhod representation~\eqref{eq:skorohod} with $(E,H,\mu)=(W_{0,0},H_0,\mu_{0,0})$ and $\rho=1_{\lm_{a,b}\left(\overline{W_{a,b}^\Om}\right)}$.
Furthermore, $\partial W_{a,b}^\Om\setminus\partial' W_{a,b}^\Om$ has a null capacity that is associated with $(\cE',\cF')$.
In particular, the mass of the measure $\nu$ that  corresponds to $\rho$ in Theorem~$\ref{th:1}$~$(3)$ concentrates on $\lm_{a,b}(\partial' W_{a,b}^\Om)$.
\end{thm}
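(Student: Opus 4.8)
The plan is to reduce the entire statement to the single claim $1_{\overline{W_{a,b}^\Om}}\in BV(W_{a,b})$; granting this, closability, the Dirichlet-form properties and the Skorokhod representation~\eqref{eq:skorohod} are read off from Theorems~\ref{th:1}--\ref{th:3} after transporting to $(W_{0,0},H_0,\mu_{0,0})$ by $\lm_{a,b}$, and only the capacity assertion needs a separate argument. To prove the BV property I use the equivalence (1)$\Leftrightarrow$(2) of Theorem~\ref{th:1}. Let $\rho_\Om$ denote the signed distance to $\partial\Om$, taken positive inside $\Om$, and put
\[
\Phi(w)=\inf_{t\in[0,1]}\rho_\Om(w(t)),
\]
so that $\overline{W_{a,b}^\Om}=\{\Phi\ge0\}$. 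Since $\rho_\Om$ is $1$-Lipschitz and $|h(t)|_{\R^d}\le|h|_{H_0}$ for $h\in H_0$, the functional $\Phi$ is $H_0$-Lipschitz; hence $\Phi\in\D^{1,1}$ with $|\nabla\Phi|_{H_0}\le1$ $\mu_{a,b}$-a.e. Choosing smooth $g_n\colon\R\to[0,1]$ with $g_n\equiv0$ on $(-\infty,-1/n]$, $g_n\equiv1$ on $[0,\infty)$ and $0\le g_n'\le2n$, I set $\rho_n=g_n\circ\Phi\in\D^{1,1}$. Then $\rho_n\to1_{\overline{W_{a,b}^\Om}}$ in $L^1(W_{a,b})$, while the chain rule gives
\[
\|\nabla\rho_n\|_{L^1(W_{a,b}\to H_0)}=\int_{W_{a,b}}|g_n'(\Phi)|\,|\nabla\Phi|_{H_0}\,d\mu_{a,b}\le2n\,\mu_{a,b}\bigl(\{-1/n<\Phi<0\}\bigr).
\]
Thus the BV property follows once I prove the \emph{shell estimate} $\mu_{a,b}(\{-\eps<\Phi<0\})\le C\eps$ as $\eps\downarrow0$.

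This shell estimate is the technical core and the step I expect to be the main obstacle, since $\Om$ is permitted to be singular along $\partial\Om$. The event $\{-\eps<\Phi<0\}$ consists of the paths that leave $\overline\Om$ but penetrate the complement by depth less than $\eps$. I track the real-valued process $D(t)=\rho_\Om(w(t))$, which is positive at $t=0,1$ (as $a,b\in\Om$) and satisfies $\inf_tD=\Phi$; since $D$ stays away from $0$ near the fixed endpoints, any value of $\Phi$ below $\eps$ is attained on a compact subinterval of $(0,1)$, where the bridge drift of $w$ is bounded. The uniform exterior ball condition is exactly what is needed: if $x\notin\overline\Om$ with $\mathrm{dist}(x,\overline\Om)<\dl$, the exterior ball $B(z,\dl)$ at the nearest boundary point satisfies $x\in B(z,\dl)$ and $\mathrm{dist}(x,\overline\Om)=\dl-|x-z|_{\R^d}$, so $\rho_\Om$ is $C^{1,1}$ on the exterior collar $\{-\dl<\rho_\Om<0\}$ with second derivatives bounded uniformly in the boundary point. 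By It\^o's formula $D$ has a unit-speed martingale part (a Brownian motion after a time change, because $|\nabla\rho_\Om|=1$ a.e.) plus a drift that is bounded on this collar; writing the shell probability as the difference $\mu_{a,b}(D\text{ reaches }0)-\mu_{a,b}(D\text{ reaches }-\eps)$ and applying the strong Markov property at the first hitting time of $\partial\Om$, I compare the ensuing excursion in the collar to a one-dimensional Brownian motion with constant drift. The estimate then reduces to the elementary one-dimensional fact that the running minimum of such a process has a bounded density near $0$, which yields $\mu_{a,b}(\{-\eps<\Phi<0\})\le C\eps$. For unbounded $\Om$ I first intersect with $\{\sup_t|w(t)|_{\R^d}\le R\}$, whose complement carries super-exponentially small $\mu_{a,b}$-mass, so that $C$ stays uniform. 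The same one-dimensional input shows that $\Phi$ has no atom at $0$, whence $\mu_{a,b}(\partial W_{a,b}^\Om)=\mu_{a,b}(\{\Phi=0\})=0$ and $1_{\overline{W_{a,b}^\Om}}=1_{W_{a,b}^\Om}$ $\mu_{a,b}$-a.e.

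With $1_{\overline{W_{a,b}^\Om}}\in BV(W_{a,b})$ established, I next verify that $(\cE',\cF C_b^1)$ is closable on $L^2(\overline{W_{a,b}^\Om},\mu_{a,b})$. Here I exploit the boundary regularity furnished by the exterior ball condition together with $\mu_{a,b}(\partial W_{a,b}^\Om)=0$ to control the boundary contribution in the integration by parts formula of Theorem~\ref{th:1}~(3), forcing $\nabla f_n\to0$ whenever $f_n\to0$ in $L^2(\overline{W_{a,b}^\Om},\mu_{a,b})$ with $\{\nabla f_n\}$ Cauchy in $L^2$; this is closability. Transporting the form to $(W_{0,0},H_0,\mu_{0,0})$ through $\lm_{a,b}$ and invoking Theorem~\ref{th:2}, the closure $(\cE',\cF')$ is a quasi-regular, local and conservative Dirichlet form and the associated diffusion satisfies~\eqref{eq:skorohod} with the stated $\rho$, while Theorem~\ref{th:3} places the mass of $\nu$ on the topological boundary, which here equals $\lm_{a,b}(\partial W_{a,b}^\Om)$.

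Finally I show $\cp(\partial W_{a,b}^\Om\setminus\partial'W_{a,b}^\Om)=0$. A path belongs to this set precisely when it meets $\partial\Om$ at two distinct interior times, hence
\[
\partial W_{a,b}^\Om\setminus\partial'W_{a,b}^\Om\subset\bigcup_{q\in\Q\cap(0,1)}A_q,\qquad A_q=\{\Phi^-=0\}\cap\{\Phi^+=0\}\cap\overline{W_{a,b}^\Om},
\]
where $\Phi^-(w)=\inf_{t\in(0,q)}\rho_\Om(w(t))$ and $\Phi^+(w)=\inf_{t\in(q,1)}\rho_\Om(w(t))$. Fixing $q$, the Markov property of $\mu_{a,b}$ at time $q$ makes $\Phi^-$ and $\Phi^+$ conditionally independent given $w(q)$; combined with the one-sided form of the shell estimate for each factor (each of $\Phi^\pm$ has a bounded density near $0$, uniformly in the conditioning point on bounded sets), this gives that the joint law of $(\Phi^-,\Phi^+)$ has a bounded density at the origin, i.e.\ $\mu_{a,b}(\{\Phi^-<s,\ \Phi^+<s\})=O(s^2)$. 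I then take the test functions $u_\eps=\psi_\eps\bigl(\sqrt{(\Phi^-)^2+(\Phi^+)^2}\bigr)$, with $\psi_\eps$ the radial logarithmic cutoff equal to $1$ on $[0,\eps]$ and $0$ beyond a fixed small radius $\eta$. Since $A_q$ is a codimension-two constraint in the $(\Phi^-,\Phi^+)$-directions and the joint density is bounded, the polar area element supplies an extra factor of the radius, so $\cE'(u_\eps,u_\eps)=O(1/\log(\eta/\eps))$; moreover $\|u_\eps\|_{L^2(\mu_{a,b})}^2$ tends to $\mu_{a,b}(A_q)=0$ by dominated convergence. Both quantities therefore vanish as $\eps\downarrow0$, whence $\cp(A_q)=0$ and $\cp(\partial W_{a,b}^\Om\setminus\partial'W_{a,b}^\Om)=0$. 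As $\nu$ is smooth with respect to $(\cE',\cF')$, it does not charge this capacity-zero set; together with Theorem~\ref{th:3} this shows that the mass of $\nu$ concentrates on $\lm_{a,b}(\partial'W_{a,b}^\Om)$, completing the proof.
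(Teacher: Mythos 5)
Your high-level skeleton coincides with the paper's: a one-parameter cutoff of a signed-distance functional plus a shell estimate and Theorem~\ref{th:1}(1)$\Leftrightarrow$(2) gives the BV property, closability comes from $\mu_{a,b}(\partial W_{a,b}^\Om)=0$, Theorems~\ref{th:2}--\ref{th:3} give the Skorokhod representation, and the capacity claim is handled by the same device as the paper (a rational two-parameter covering, the radial logarithmic cutoff, and an $O(r^2)$ bound for the joint law of the two infima). But the technical core --- the shell estimate under the pinned measure --- is exactly where your argument has two genuine gaps. First, your justification of the one-dimensional comparison is wrong: from $B(z,\dl)\cap\overline\Om=\{y\}$ one gets only $\overline\Om\subset\{x:|x-z|_{\R^d}\ge\dl\}$, hence the \emph{inequality} $\mathrm{dist}(x,\overline\Om)\ge\dl-|x-z|_{\R^d}$; your asserted equality holds only for $x$ on the segment from $z$ to $y$. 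Consequently the claim that $\rho_\Om$ is $C^{1,1}$ with uniformly bounded second derivatives on the exterior collar does not follow: that would require every point of the collar to have a unique nearest point of $\overline\Om$ (positive reach), which the hypothesis --- one exterior ball per boundary point, in a single direction --- does not obviously supply and which you do not prove. Without it, the It\^o decomposition of $D(t)=\rho_\Om(w(t))$ with bounded drift is unsupported. The paper never differentiates the distance function: its Lemma~\ref{lem:ito} applies It\^o's formula to the radial process $R_t=|\om_t-z|_{\R^d}$ around the exterior ball's center and compares with a one-dimensional Brownian motion with constant drift $K=(d-1)/(2\dl)$; the same trick would also repair your outer-shell step, since $\inf_{t\le s}\rho_\Om(\om_t)>-\eps$ forces $R_t>\dl-\eps$ for $t\le s$.

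Second, your treatment of the pinning is false as stated: ``any value of $\Phi$ below $\eps$ is attained on a compact subinterval of $(0,1)$, where the bridge drift of $w$ is bounded'' is not true --- the path can first reach $\partial\Om$ at times arbitrarily close to $1$, and there the bridge conditioning degenerates. Handling this is the bulk of the paper's Section~3: the absolute-continuity comparisons between the bridge and free Brownian motion (Lemmas~\ref{lem:A}--\ref{lem:C}), whose comparison factors $\sup p_t(y,b)/p_1(a,b)$ are finite only if the path is kept away from $b$ (whence the decomposition $I_1+I_2$ around the ball $V=B(b,\alpha)$ in Proposition~\ref{prop:2}, with $I_2$ requiring the $\sinh$-ratio Laplace-transform estimate); the exit-time density bound $\frac{d}{dt}\hat P_x^r([0,t])\le C_2t^{-1}$ of Lemma~\ref{lem:differentiable} (via the heat-kernel derivative estimates of \cite{Ou}), needed to integrate the singular restart factor $(1-\tau_r)^{-1/2}$; and the dyadic decompositions in Propositions~\ref{prop:1} and \ref{prop:path}. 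None of this is addressed in your sketch. Two further points: your outer-shell estimate $\mu_{a,b}(\{-\eps<\Phi<0\})\le C\eps$ does not by itself yield $\mu_{a,b}(\partial W_{a,b}^\Om)=\mu_{a,b}(\{\Phi=0\})=0$ --- that set concerns paths that \emph{stay} in $\overline\Om$ while touching $\partial\Om$, and is killed by the paper's inner estimate (Proposition~\ref{prop:2}), which your approximation from outside bypasses; and your closability mechanism (``controlling the boundary contribution in the integration by parts formula, forcing $\nabla f_n\to0$'') is not an argument --- what is actually used is $\mu_{a,b}(\partial W_{a,b}^\Om)=0$ together with the closability criterion cited from \cite[p.~230]{Fu}. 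Finally, in the capacity part the paper deliberately takes the second infimum over $[s_1,s_2]$ with $s_2<1$ so that the comparison factor $p_{1-s_2}(b,b)/p_1(a,b)$ stays finite; your $\Phi^+$ runs up to time $1$ and would need an extra argument of the same kind.
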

\section{Some estimates for (pinned) Brownian motion}
Subsequently, $C_i$ denotes an insignificant positive constant and a domain $\Om$ in $\R^d$ is assumed to satisfy the uniform exterior ball condition.

We define a Lipschitz function $q$ on $\R^d$ by 
\[
q(x)=\inf_{y\in\R^d\setminus\Om}|x-y|_{\R^d}-\inf_{y\in\Om}|x-y|_{\R^d}.
\]
For $r\ge0$, set $\Om_r=\{x\in\R^d\mid q(x)>r\}$.
Note that $\Om_0=\Om$ and $\{q(x)\ge0\}=\overline{\Om}$.

Let $W=C([0,\infty)\to\R^d)$.
Let $\{\hat P_x\}_{x\in\R^d}$ be the probability measures on $W$ such that the coordinate process $\{\om_t\}_{t\ge0}$ is a $d$-dimensional Brownian motion starting at $x$ under $\hat P_x$ for each $x\in\R^d$.
For $t\ge0$, let $\hat\cF_t$ be a $\sg$-field generated by $\{\{\om_s\in D\}; s\in[0,t],\ D \text{ is a Borel set of }\R^d\}$.
Then, $\{\hat\cF_t\}$ is a minimal filtration to which $\{\om_t\}$ is adapted on the canonical measurable space $(W,\hat\cF_\infty)$.
For an $\{\hat\cF_t\}$-stopping time $\tau$, define 
$\hat\cF_\tau=\{A\in\hat\cF_\infty\mid A\cap\{\tau\le t\}\in\hat\cF_t \text{ for all }t\ge0\}$.
We denote the integral with respect to $\hat P_x$ by $\hat E_x$.
The shift operator $\theta_s\colon W\to W$ is defined by $(\theta_s \om)_t=\om_{s+t}$, $t\ge0$.
\begin{lmm}\label{lem:ito}
Let $x\in\overline\Om$. 
Choose $y\in\partial\Om$ and $z\in\R^d\setminus\Om$ such that $q(x)=|x-y|_{\R^d}$ and $B(z,\dl)\cap\overline\Om=\{y\}$, where $\dl$ is provided in Definition~$\ref{def:uebc}$.
Let $K=(d-1)/(2\dl)$ and $R_t=|\om_t-z|_{\R^d}$ for $\om=\{\om_t\}\in W$.
Then, for each $u>0$,
\[
\{R_t\ge\dl \text{ for all }t\in[0,u]\}\subset\{R_t\le q(x)+\dl+Kt+S_t \text{ for all }t\in[0,u]\}
\]
up to a $\hat P_x$-null set.
Here, $S_t$ is a one-dimensional Brownian motion under $\hat P_x$ starting at $0$ that is defined by
\[
   S_t(\om)=\sum_{i=1}^d\int_0^t\frac{\om_s^{(i)}-z^{(i)}}{R_s}\,d\om^{(i)}_s,\quad
   \om_s=(\om^{(1)}_s,\ldots,\om^{(d)}_s),\ 
   z=(z^{(1)},\ldots,z^{(d)}).
\]
\end{lmm}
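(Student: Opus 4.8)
The plan is to realize $R_t=|\om_t-z|_{\R^d}$ as a Bessel-type process and to read the claimed inequality directly off its semimartingale decomposition, after (i) bounding the starting point by the geometry of the exterior ball and (ii) bounding the drift on the event in question.

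First I would record the two geometric inputs. Since $B(z,\dl)\cap\overline\Om=\{y\}$ and $y\in\partial\Om$, the point $y$ must lie on the sphere $\partial B(z,\dl)$, so that $|y-z|_{\R^d}=\dl$: if instead $|y-z|_{\R^d}<\dl$, then points of $\Om$ arbitrarily close to $y$ would also lie in $B(z,\dl)$, contradicting that the intersection is the single point $y$. Combined with $q(x)=|x-y|_{\R^d}$, the triangle inequality yields the starting estimate
\[
R_0=|x-z|_{\R^d}\le |x-y|_{\R^d}+|y-z|_{\R^d}=q(x)+\dl.
\]
The same reasoning shows $R_0\ge\dl$, so the path starts strictly away from the point $z$, where $w\mapsto|w-z|_{\R^d}$ is singular.

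Next I would obtain the semimartingale decomposition of $R_t$. Because $w\mapsto|w-z|_{\R^d}$ fails to be $C^2$ only at $z$, I would fix $\phi\in C^2(\R^d)$ that coincides with $|\cdot-z|_{\R^d}$ on $\{w\mid|w-z|_{\R^d}\ge\dl\}$, smoothing it only inside $B(z,\dl)$ (which the relevant paths never enter), and introduce the stopping time $\tau=\inf\{t\ge0\mid R_t<\dl\}$. Applying It\^o's formula to $\phi(\om_t)$ and invoking the localization property of stochastic integrals—on $[0,\tau)$ one has $\nabla\phi(\om_s)=(\om_s-z)/R_s$ and $\Delta\phi(\om_s)=(d-1)/R_s$—gives, $\hat P_x$-a.s.,
\[
R_{t\wedge\tau}=R_0+S_{t\wedge\tau}+\frac12\int_0^{t\wedge\tau}\frac{d-1}{R_s}\,ds,
\]
where $S_t$ is exactly the martingale written in the statement; that $S_t$ is a one-dimensional Brownian motion follows from L\'evy's characterization, since its quadratic variation is $\int_0^t(|\om_s-z|_{\R^d}^2/R_s^2)\,ds=t$. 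In the boundary case $d=1$ this same device absorbs the Tanaka local-time term, which vanishes on paths that never reach $z$.

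Finally I would restrict to the event $A=\{R_t\ge\dl \text{ for all } t\in[0,u]\}$. On $A$ one has $\tau\ge u$, so $t\wedge\tau=t$ for every $t\in[0,u]$; moreover $R_s\ge\dl$ forces $(d-1)/(2R_s)\le(d-1)/(2\dl)=K$, whence the drift integral is at most $Kt$. Inserting $R_0\le q(x)+\dl$ then yields $R_t\le q(x)+\dl+Kt+S_t$ for all $t\in[0,u]$ on $A$, which is the asserted inclusion up to a $\hat P_x$-null set. I expect the only genuine subtlety to be justifying the decomposition uniformly in the dimension: precisely, the localization argument that legitimately replaces $\nabla\phi$ and $\Delta\phi$ by the radial expressions on $[0,\tau)$, together with the observation that this one device simultaneously removes the non-smoothness at $z$ and disposes of the local-time contribution when $d=1$.
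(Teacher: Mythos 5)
Your proof is correct and follows essentially the same route as the paper's: the paper likewise applies It\^o's formula to $R_t$ (localized at $\sigma=\inf\{t\mid R_t=0\}$, with $R_0=q(x)+\dl$ noted directly) and bounds the drift $(d-1)/(2R_s)$ by $K$ on the event $\{R_t\ge\dl\text{ on }[0,u]\}$. Your version is merely more meticulous---the $C^2$ smoothing with stopping at level $\dl$, the triangle-inequality bound $R_0\le q(x)+\dl$ (which indeed suffices, and in fact holds with equality as the paper asserts), and the explicit disposal of the $d=1$ local-time term---so no gap.
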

\begin{proof}
Define an $\{\hat\cF_t\}$-stopping time $\sg$ by $\sg=\inf\{t\ge0\mid R_t=0\}$.
Note that $R_0=|x-z|_{\R^d}=q(x)+\dl$ $\hat P_x$-a.e.
By virtue of It\^o's formula,
\[
  R_t=q(x)+\dl+\int_0^t\frac{d-1}{2R_s}\,ds+S_t
  \quad\text{on }\{t<\sg\}\quad
  \hat P_x\text{-a.e.}
\]
Therefore, the assertion holds.
\end{proof}
\begin{prp}\label{prop:1}
There exists $C_1>0$ such that for every $u>0$ and $x\in\overline\Om$,
\[
  \hat P_x\left[\inf_{t\in[0, u]}q(\om_t)\ge0\right]\le C_1(1+u^{-1/2})q(x).
\]
\end{prp}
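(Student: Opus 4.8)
The plan is to dominate the survival probability by a one-dimensional first-passage problem for a Brownian motion with constant drift, via Lemma~\ref{lem:ito}, and then to read off the estimate from the explicit first-passage structure. First I would note that, since $\{q\ge0\}=\overline\Om$, the event equals $\{\om_t\in\overline\Om\text{ for all }t\in[0,u]\}$. Fixing $x\in\overline\Om$ and choosing $y,z,K,\dl$ as in Lemma~\ref{lem:ito}, the relation $B(z,\dl)\cap\overline\Om=\{y\}$ together with $|y-z|_{\R^d}=\dl$ forces every point of $\overline\Om$ to lie at distance at least $\dl$ from $z$; hence $\{\om_t\in\overline\Om\ \forall t\le u\}\subset\{R_t\ge\dl\ \forall t\le u\}$. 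On the latter event Lemma~\ref{lem:ito} gives $R_t\le q(x)+\dl+Kt+S_t$, and together with $R_t\ge\dl$ this yields $q(x)+Kt+S_t\ge0$ throughout $[0,u]$. Consequently
\[
\hat P_x\Bigl[\inf_{t\in[0,u]}q(\om_t)\ge0\Bigr]\le\hat P_x\bigl[q(x)+Kt+S_t\ge0,\ t\in[0,u]\bigr]=\hat P_x[T>u],
\]
where $T=\inf\{t\ge0\mid q(x)+Kt+S_t=0\}$ and $S$ is, by Lemma~\ref{lem:ito}, a standard one-dimensional Brownian motion under $\hat P_x$; thus the right-hand side is the survival probability of a Brownian motion with drift $K$ started at height $q(x)$ above the barrier $0$.

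Next I would split $\hat P_x[T>u]=\hat P_x[T=\infty]+\hat P_x[u<T<\infty]$. Since the drift $K\ge0$ points away from the barrier, the scale-function (gambler's ruin) computation for the drifted motion gives $\hat P_x[T=\infty]=1-e^{-2Kq(x)}\le 2Kq(x)$. For the finite-time tail I would use the explicit first-passage density
\[
f_T(t)=\frac{q(x)}{\sqrt{2\pi}\,t^{3/2}}\exp\!\left(-\frac{(q(x)+Kt)^2}{2t}\right),\quad t>0,
\]
and bound the Gaussian factor by $1$:
\[
\hat P_x[u<T<\infty]=\int_u^\infty f_T(t)\,dt\le\frac{q(x)}{\sqrt{2\pi}}\int_u^\infty t^{-3/2}\,dt=\sqrt{\tfrac{2}{\pi}}\,u^{-1/2}q(x).
\]
Adding the two estimates yields $\hat P_x[T>u]\le\bigl(2K+\sqrt{2/\pi}\,u^{-1/2}\bigr)q(x)\le C_1(1+u^{-1/2})q(x)$ with $C_1=\max\{2K,\sqrt{2/\pi}\}$. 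Both bounds are uniform in $x$, because the uniform exterior ball condition supplies a single $\dl$, hence a single $K=(d-1)/(2\dl)$, valid for all $x\in\overline\Om$; the case $q(x)=0$ is automatic.

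The reduction to one dimension is exactly what Lemma~\ref{lem:ito} accomplishes, so the only real substance left is the drifted first-passage estimate, and its one delicate feature is that a single expression must capture two regimes: the $u^{-1/2}q(x)$ decay for small $u$ and the linear-in-$q(x)$ saturation as $u\to\infty$. The decomposition into $\{T=\infty\}$ and $\{u<T<\infty\}$ is designed precisely for this, with the crude bound $\exp(-(q(x)+Kt)^2/(2t))\le1$ already sufficing for the tail. I expect the first-passage density itself to be the ingredient one would isolate as a separate one-dimensional lemma; should one wish to avoid quoting it, the identical two quantities follow from a Girsanov reduction to driftless Brownian motion combined with the reflection principle, which is presumably the form of the ``detailed one-dimensional estimates'' mentioned in the introduction.
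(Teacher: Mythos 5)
Your proposal is correct and follows essentially the same route as the paper: both reduce the problem via Lemma~\ref{lem:ito} to the first passage of a one-dimensional Brownian motion with constant drift $K$ below level $-q(x)$, and both use the explicit first-passage law from \cite{BS} (your split into $\{T=\infty\}$ and $\{u<T<\infty\}$ is exactly the paper's atom $(1-e^{-2Kr})\dl_\infty$ plus the density integrated over $(u,\infty)$ with the Gaussian factor bounded by $1$). The only cosmetic difference is that the paper works with a level $r>q(x)$ and lets $r\to q(x)$, which sidesteps the null-set issue at the barrier and the case $q(x)=0$ that you instead handle directly.
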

\begin{proof}
We retain the notations in Lemma~\ref{lem:ito}, from which
\[
\hat P_x\left[ R_t\ge\dl\text{ for all }t\in[0,u]\right]
\le \hat P_x\left[q(x)+\dl+K t+S_t\ge\dl\text{ for all }t\in[0,u]\right].
\]
Let $r>q(x)$ and define $\eta=\inf\{t\ge0\mid Kt+S_t\le -r\}$.
The law of $\eta$ under $\hat P_x$ is given by 
\[
\hat P_x[\eta\in dt]=1_{(0,\infty)}(t)\frac{r}{\sqrt{2\pi t^3}}\exp\left(-\frac{(r+Kt)^2}{2t}\right)\,dt+(1-\exp(-2Kr))\dl_{\infty}(dt),
\]
where $\dl_\infty$ is a delta measure at $\infty$. (See, e.g., \cite[p.~295]{BS}.)
Then, we have
\begin{align*}
\hat P_x\left[\inf_{t\in[0, u]}q(\om_t)\ge0\right]
&\le \hat P_x\left[ R_t\ge\dl\text{ for all }t\in[0,u]\right]
\le \hat P_x[\eta>u]\\
&=\int_u^\infty \frac{r}{\sqrt{2\pi t^3}}\exp\left(-\frac{(r+Kt)^2}{2t}\right)\,dt
+1-\exp(-2Kr)\\
&\le \int_u^\infty \frac{r}{\sqrt{2\pi t^3}}\,dt
+2Kr
=\sqrt{\frac2\pi}\frac{r}{\sqrt u}+2Kr.
\end{align*}
Letting $r\to q(x)$, we obtain the desired inequality.
\end{proof}
For $r>0$, define an $\{\hat\cF_t\}$-stopping time $\tau_r$ by $\tau_r=\inf\{t\ge0\mid \om_t\not\in\Om_r\}$.
Let $\hat P_x^r$ be the law of $\tau_r$ under $\hat P_x$.
\begin{lmm}\label{lem:differentiable}
$\hat P_x^r([0,t])$ is differentiable in $t$ on $(0,\infty)$ and there exists a constant $C_2>0$ such that
$
\frac{d}{dt}\hat P_x^r([0,t])\le C_2 t^{-1}
$.
The constant $C_2$ is taken independently of $x$, $r$ and $t$.
\end{lmm}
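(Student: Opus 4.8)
The plan is to rephrase the statement in terms of the semigroup of the killed process and then exploit its analyticity. Let $(T_t)_{t\ge0}$ be the transition semigroup of Brownian motion killed upon leaving $\Om_r$, that is $(T_tg)(x)=\hat E_x[g(\om_t);\,\tau_r>t]$, with Dirichlet heat kernel $p_t^{\Om_r}(x,y)$. Then $\hat P_x^r([0,t])=\hat P_x[\tau_r\le t]=1-(T_t1)(x)$, so it suffices to show that $t\mapsto (T_t1)(x)$ is differentiable on $(0,\infty)$ with $\left|\partial_t (T_t1)(x)\right|\le C_2/t$ uniformly in $x$ and $r$. The generator of $(T_t)$ is the Dirichlet operator $A=\frac12\Delta$ on $\Om_r$, which is self-adjoint and nonpositive on $L^2(\Om_r)$; hence $(T_t)$ is an analytic, symmetric, sub-Markovian semigroup, $t\mapsto T_t1$ is smooth on $(0,\infty)$, and $\partial_t(T_t1)(x)=(AT_t1)(x)=\frac12\Delta(T_t1)(x)\le0$. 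In particular $\frac{d}{dt}\hat P_x^r([0,t])=-\partial_t(T_t1)(x)\ge0$, and the lemma reduces to the bound $|\partial_t(T_t1)(x)|\le C_2/t$.

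The main step is therefore a time-derivative estimate for the Dirichlet heat kernel, namely
\[
\int_{\Om_r}\bigl|\partial_t p_t^{\Om_r}(x,y)\bigr|\,dy\le \frac{C_2}{t},\qquad x\in\Om_r,\ t>0,
\]
with $C_2$ independent of $x$, $r$, and of $\Om$ itself. I would derive this from the analyticity bound $\|Ae^{tA}\|_{L^2\to L^2}=\sup_{\mu\ge0}\mu e^{-t\mu}=1/(et)$, upgraded to a pointwise Gaussian bound of the form $|\partial_t p_t^{\Om_r}(x,y)|\le (C_2/t)\,p_{2t}(x,y)$. Writing $\partial_t p_t^{\Om_r}(x,\cdot)=Ae^{(t/2)A}p_{t/2}^{\Om_r}(x,\cdot)$ and using $\|p_{t/2}^{\Om_r}(x,\cdot)\|_{L^2}=p_t^{\Om_r}(x,x)^{1/2}\le p_t(x,x)^{1/2}$ controls the $L^2$ norm of the derivative, and the off-diagonal Gaussian decay is inherited from the comparison $0\le p_t^{\Om_r}\le p_t$. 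Integrating the pointwise bound against $\int_{\R^d}p_{2t}(x,y)\,dy=1$ yields the displayed estimate, whence $|\partial_t(T_t1)(x)|=\left|\int_{\Om_r}\partial_t p_t^{\Om_r}(x,y)\,dy\right|\le C_2/t$. The crucial observation is that the only domain-dependent input is the universal Gaussian upper bound $p_t^{\Om_r}\le p_t$, valid for every open set; this is precisely what forces $C_2$ to be independent of $x$, $r$, and $\Om$.

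A variant closer to the one-dimensional reduction used in Lemma~\ref{lem:ito} and Proposition~\ref{prop:1} is also available. One checks that $\Om_r$ again satisfies the uniform exterior ball condition, with radius no smaller than $\dl$; applying the It\^o comparison of Lemma~\ref{lem:ito} to $\Om_r$ expresses the exit flux through the radial process $R_t=|\om_t-z|_{\R^d}$, which is dominated by the drifted one-dimensional motion $q(x)+\dl+Kt+S_t$. The first-passage density of $Kt+S_t$ to a level, read off from the law displayed in the proof of Proposition~\ref{prop:1}, is $\frac{r}{\sqrt{2\pi t^3}}\exp\!\left(-\frac{(r+Kt)^2}{2t}\right)$, and the elementary inequality $\sup_{a>0}\frac{a}{\sqrt{2\pi}}\,t^{-3/2}e^{-a^2/(2t)}=Ct^{-1}$ gives the desired $C_2/t$ with the same uniformity.

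I expect the hard part to be the passage from a comparison of distribution functions to a bound on the density. Stochastic domination (here $\tau_r\le\inf\{t:R_t=\dl\}$) only compares the values $\hat P_x^r([0,t])$, not their $t$-derivatives, so the naive route of dominating the stopping time and then differentiating is invalid. My resolution is to argue throughout at the level of the heat-kernel time derivative $\partial_t p_t^{\Om_r}$ rather than at the level of the stopping-time laws: the analytic bound controls the derivative directly, the Gaussian upper bound handles the contribution far from $\partial\Om_r$, and the exterior-ball geometry (radius $\ge\dl$) keeps the boundary flux finite without any smoothness assumption on $\partial\Om$. This last point is exactly where working with inequalities rather than with the exact boundary expression, in the spirit of the introduction, pays off.
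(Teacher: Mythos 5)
Your main line is essentially the paper's own proof: the paper likewise writes $\hat P_x^r([0,t])=1-\int_{\Om_r}p^r_t(x,z)\,dz$ and reduces the lemma to a universal time-derivative Gaussian bound for the Dirichlet heat kernel of an \emph{arbitrary} open set, $|\partial_t p^r_t(y,z)|\le C_3 t^{-d/2-1}\exp(-C_4|y-z|_{\R^d}^2/t)$ with constants depending only on $d$ --- which, integrated in $z$, gives $C_2t^{-1}$ (the paper inserts an extra Chapman--Kolmogorov splitting $p^r_t=\int p^r_{t/2}p^r_{t/2}$ to justify differentiating under the integral, which your semigroup formulation accomplishes equally well once the dominating bound is in hand). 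The one genuine soft spot in your write-up is the derivation of that bound: the claim that the off-diagonal Gaussian decay of $\partial_t p^r_t$ is ``inherited from the comparison $0\le p^r_t\le p_t$'' is not an argument, since a pointwise bound on a kernel implies nothing about its time derivative. Your $L^2$ analyticity estimate $\|Ae^{tA}\|_{L^2\to L^2}\le 1/(et)$, combined with $\|p^r_{t/3}(x,\cdot)\|_{L^2}=p^r_{2t/3}(x,x)^{1/2}\le C t^{-d/4}$, yields only the \emph{on-diagonal} bound $|\partial_t p^r_t(x,y)|\le C_d\,t^{-d/2-1}$; and since the paper does not assume $\Om$ bounded, $\Om_r$ may have infinite volume, so this alone does not give the needed $L^1$-in-$y$ estimate. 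The self-improvement of Gaussian kernel bounds to Gaussian bounds on $\partial_t p^r_t$ requires complex-time analyticity (Davies' exponential perturbation, or a Phragm\'en--Lindel\"of interpolation in a complex time sector) --- and that is precisely the content of the result the paper simply cites, \cite[Theorem~6.17]{Ou}. So your route closes correctly if you quote that standard theorem, as the paper does, rather than the hand-waved interpolation.

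Two further remarks. First, you correctly flag that your one-dimensional ``variant'' is invalid as stated: Lemma~\ref{lem:ito} yields domination of distribution functions of exit times, and an inequality between distribution functions cannot be differentiated into an inequality between densities; your own diagnosis of this is exactly right, and the variant should be dropped rather than advertised as available. Second, contrary to your closing remark, no exterior-ball geometry enters this lemma at all: the derivative Gaussian bound holds for \emph{every} open set, with constants depending only on the dimension, and this --- not the radius $\dl$ --- is what makes $C_2$ independent of $x$, $r$, and $\Om$.
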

\begin{proof}
  It is sufficient to consider the case that $x\in\Om_r$.
  Let $p^r_t(\cdot,\cdot)$ be the transition density of the Brownian motion of $\Om_r$ killed at $\partial \Om_r$.
  Then, 
  \begin{align*}
    \hat P_x^r([0,t])&=\hat P_x[\tau_r\le t]
    =1-\int_{\Om_r}p^r_t(x,z)\,dz\\
    &=1-\iint_{\Om_r\times\Om_r}p^r_s(x,y)p^r_{t-s}(y,z)\,dy\,dz
  \end{align*}
  for $0<s<t$.
  From \cite[Theorem~6.17]{Ou}, $p^r_{t-s}(y,z)$ is differentiable in $t$ on $(s,\infty)$ for a.e.\ $(y,z)$ and the following estimate holds:
  \begin{align*}
  \frac{d}{dt}\hat P_x^r([0,t])
  &\le \iint_{\Om_r\times\Om_r} p^r_s(x,y)\left|\frac{\partial}{\partial t}p^r_{t-s}(y,z)\right|\,dy\,dz\\
  &\le C_3\iint_{\Om_r\times\Om_r} p^r_s(x,y)(t-s)^{-d/2-1}\exp\left(-\frac{C_4|y-z|_{\R^d}^2}{t-s}\right)\,dy\,dz\\
  &\le C_5 (t-s)^{-1}\int_{\Om_r} p^r_s(x,y)\,dy
  \le C_5 (t-s)^{-1},
  \end{align*}
  where $C_3$, $C_4$ and $C_5$ are taken independently of $x$, $r$, $s$ and $t$. By letting $s=t/2$, we complete the proof.
\end{proof}
\begin{prp}\label{prop:path}
There exists $C_6>0$ such that for all $u>0$, $r>0$, and  $x\in \Om$,
\[
  \hat P_x\left[0\le\inf_{t\in[0,u]}q(\om_t)\le r\right]\le C_6(1+u^{-1/2})r.
\]
\end{prp}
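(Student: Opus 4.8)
The plan is to reduce the estimate to Proposition~\ref{prop:1} by applying the strong Markov property at the first time $\tau_r$ the path reaches the level set $\{q=r\}$, and then to control the resulting time-integral using Lemma~\ref{lem:differentiable}. I first dispose of the trivial case $q(x)\le r$: then $q(\om_0)=q(x)\le r$, so the event in question is contained in $\{\inf_{t\in[0,u]}q(\om_t)\ge0\}$, whose probability Proposition~\ref{prop:1} bounds by $C_1(1+u^{-1/2})q(x)\le C_1(1+u^{-1/2})r$.

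For the main case $q(x)>r$, that is $x\in\Om_r$, the path starts strictly above level $r$, so the infimum over $[0,u]$ reaches level $r$ precisely when $\tau_r\le u$, and continuity then forces $q(\om_{\tau_r})=r$. Since $q(\om_t)>r>0$ for $t<\tau_r$, on the relevant event the requirement $\inf_{t\in[0,u]}q(\om_t)\ge0$ is equivalent to $\inf_{t\in[\tau_r,u]}q(\om_t)\ge0$; hence the event coincides with $\{\tau_r\le u\}\cap\{\inf_{t\in[\tau_r,u]}q(\om_t)\ge0\}$. Applying the strong Markov property at $\tau_r$ gives
\[
\hat P_x\left[0\le\inf_{t\in[0,u]}q(\om_t)\le r\right]
=\hat E_x\left[1_{\{\tau_r\le u\}}\,\hat P_{\om_{\tau_r}}\left[\inf_{t\in[0,u-\tau_r]}q(\om_t)\ge0\right]\right].
\]
Because $\om_{\tau_r}\in\overline\Om$ with $q(\om_{\tau_r})=r$, Proposition~\ref{prop:1} bounds the inner probability by $C_1(1+(u-\tau_r)^{-1/2})r$, so the right-hand side is at most $C_1 r\int_0^u(1+(u-s)^{-1/2})\,\hat P_x^r(ds)$.

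It then remains to bound $I:=\int_0^u(1+(u-s)^{-1/2})\,\hat P_x^r(ds)$ by a constant multiple of $1+u^{-1/2}$, uniformly in $x$ and $r$, and this is the only real obstacle. The difficulty is a double singularity: Lemma~\ref{lem:differentiable} controls the density of $\hat P_x^r$ only by $C_2 s^{-1}$, which is not integrable against the weight $(u-s)^{-1/2}$ near $s=0$, while the weight itself blows up near $s=u$. I would resolve this by splitting the integral at $s=u/2$ and using each bound only where it is effective. On $[0,u/2]$ the weight is at most $(u/2)^{-1/2}$ and one uses only the trivial bound $\hat P_x^r([0,u/2])\le1$; on $[u/2,u]$ the density is at most $2C_2/u$ while $\int_{u/2}^u(u-s)^{-1/2}\,ds$ is a fixed multiple of $u^{1/2}$. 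Both pieces are of order $u^{-1/2}$, and adding the trivial term $\int_0^u\hat P_x^r(ds)\le1$ yields $I\le C_7(1+u^{-1/2})$. Since $C_1$ and $C_2$ are uniform in their parameters, combining the two cases gives the assertion with a suitable $C_6$.
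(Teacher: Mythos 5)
Your proposal is correct and follows essentially the same route as the paper: dispose of the case $q(x)\le r$ via Proposition~\ref{prop:1}, apply the strong Markov property at $\tau_r$ (where $q(\om_{\tau_r})=r$) together with Proposition~\ref{prop:1} again, and bound $\int_0^u\bigl(1+(u-s)^{-1/2}\bigr)\,\hat P_x^r(ds)$ by splitting at $u/2$, using the trivial mass bound on $[0,u/2]$ and the density bound of Lemma~\ref{lem:differentiable} on $[u/2,u]$, exactly as the paper does. The only deviation is technical: you invoke the strong Markov property directly with the random horizon $u-\tau_r$ (legitimate, since $u-\tau_r$ is $\hat\cF_{\tau_r}$-measurable and $(y,v)\mapsto\hat P_y\left[\inf_{t\in[0,v]}q(\om_t)\ge0\right]$ is jointly measurable), whereas the paper reduces to deterministic horizons through the dyadic decomposition $2^{-k}u<u-\tau_r\le 2^{-k+1}u$, which needs only the elementary form of the strong Markov property at the cost of an extra factor of $\sqrt2$.
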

\begin{proof}
First, let $x\in\Om\setminus \Om_r$.
From Proposition~\ref{prop:1} and the fact that $0<q(x)\le r$, 
\[
\hat P_x\left[0\le\inf_{t\in[0,u]}q(\om_t)\le r\right]
\le C_1(1+u^{-1/2})q(x)
\le C_1(1+u^{-1/2})r.
\]
Next, let $x\in\Om_r$. Then,
{\allowdisplaybreaks
\begin{align*}
&\hat P_x\left[0\le\inf_{t\in[0,u]}q(\om_t)\le r\right]
= \hat P_x\left[\tau_r\le u,\ 0\le\inf_{t\in[\tau_r,u]}q(\om_t)\right]\\
&= \hat P_x\left[\tau_r\le u,\ 0\le\inf_{t\in[0,u-\tau_r]}q((\theta_{\tau_r}\om)_t)\right]\\
&= \sum_{k=1}^\infty \hat P_x\left[2^{-k}u<u-\tau_r\le 2^{-k+1}u,\ 0\le\inf_{t\in[0,u-\tau_r]}q((\theta_{\tau_r}\om)_t)\right]\\
&\le \sum_{k=1}^\infty \hat P_x\left[2^{-k}u<u-\tau_r\le 2^{-k+1}u,\ 0\le\inf_{t\in[0,2^{-k}u]}q((\theta_{\tau_r}\om)_t)\right].
\end{align*}%
}%
Here, we used $\hat P_x[\tau_r=u]=0$ in the third line, which follows from Lemma~\ref{lem:differentiable}.
From the strong Markov property and Proposition~\ref{prop:1},
\begin{align*}
&\hat P_x\left[\left.2^{-k}u<u-\tau_r\le 2^{-k+1}u,\ 0\le\inf_{t\in[0,2^{-k}u]}q((\theta_{\tau_r}\om)_t)\right|\hat\cF_{\tau_r}\right]\\
&=1_{\{2^{-k}u<u-\tau_r\le 2^{-k+1}u\}}\cdot \hat P_{\om_{\tau_r}}\!\left[0\le\inf_{t\in[0,2^{-k}u]}q(\om_t)\right]\\
&\le C_1 1_{\{2^{-k}u<u-\tau_r\le 2^{-k+1}u\}}\cdot (1+(2^{-k}u)^{-1/2})r\\
&\le C_1 1_{\{2^{-k}u<u-\tau_r\le 2^{-k+1}u\}}\cdot (1+((u-\tau_r)/2)^{-1/2})r.
\end{align*}
Therefore, 
{\allowdisplaybreaks
\begin{align*}
&\hat P_x\left[0\le\inf_{t\in[0,u]}q(\om_t)\le r\right]\\
&\le C_1 r\hat E_x[1+((u-\tau_r)/2)^{-1/2};\ \tau_r\le u]\\
&\le C_1 r\int_0^u \left(1+\left(\frac{u-s}2\right)^{-1/2}\right)\,\hat P_x^r(ds)\\
&\le C_1 r(1+(u/4)^{-1/2})\hat P_x[\tau_r\le u/2]
+C_1 r\int_{u/2}^u \left(1+\left(\frac{u-s}2\right)^{-1/2}\right)C_2s^{-1}\,ds\\
&\le C_1 r(1+(u/4)^{-1/2})+\frac{2C_1C_2 r}{u}\int_{u/2}^u\left(1+\left(\frac{u-s}2\right)^{-1/2}\right)\,ds\\
&\le C_6(1+u^{-1/2})r.
\end{align*}%
}%
Here, we used Lemma~\ref{lem:differentiable} in the third line.
This completes the proof.
\end{proof}
Let $\hat P_{a,b}$ be a probability measure on $W$ such that $\{\om_t\}_{t\in[0,1]}$ is a pinned Brownian motion under $\hat P_{a,b}$ with $\om_0=a$ and $\om_1=b$.
The following lemma is proved by the definition of $\hat P_{a,b}$ and the monotone class theorem.
\begin{lmm}\label{lem:A}
For $t\in [0,1)$, $A\in \hat\cF_t$, and a Borel set $D$ of $\R^d$,
\[
  \hat P_{a,b}[A\cap \{\om_t\in D\}]
  \le \hat P_{a}[A\cap \{\om_t\in D\}]
  \cdot \sup_{y\in D}\frac{p_{1-t}(y,b)}{p_1(a,b)}.
\]
\end{lmm}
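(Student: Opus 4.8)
The plan is to show that, when restricted to the $\sg$-field $\hat\cF_t$, the pinned measure $\hat P_{a,b}$ is absolutely continuous with respect to the free Brownian motion $\hat P_a$, with the explicit density $p_{1-t}(\om_t,b)/p_1(a,b)$, a function of the position $\om_t$ alone. Granting this, the inequality follows at once by replacing $p_{1-t}(\om_t,b)$ on the event $\{\om_t\in D\}$ with its supremum over $D$.

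First I would verify the density on cylinder events. Fix $0=t_0<t_1<\cdots<t_N=t$ and Borel sets $A_1,\ldots,A_N$ of $\R^d$, and let $A=\{\om_{t_i}\in A_i,\ i=1,\ldots,N\}$. By the definition of $\hat P_{a,b}$ through the transition densities $p$, the joint density of $(\om_{t_1},\ldots,\om_{t_N})$ under $\hat P_{a,b}$ at $(x_1,\ldots,x_N)$ is
\[
  \frac{1}{p_1(a,b)}\,p_{t_1}(a,x_1)\cdots p_{t_N-t_{N-1}}(x_{N-1},x_N)\,p_{1-t}(x_N,b),
\]
while under $\hat P_a$ it is $p_{t_1}(a,x_1)\cdots p_{t_N-t_{N-1}}(x_{N-1},x_N)$. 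The ratio is $p_{1-t}(x_N,b)/p_1(a,b)$, so on such cylinders the density of $\hat P_{a,b}$ relative to $\hat P_a$ equals $p_{1-t}(\om_t,b)/p_1(a,b)$.

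Next I would extend this from cylinders to all of $\hat\cF_t$. The cylinder events above form a $\pi$-system generating $\hat\cF_t$, so the two finite measures $B\mapsto\hat P_{a,b}[B]$ and $B\mapsto\hat E_a[1_B\,p_{1-t}(\om_t,b)]/p_1(a,b)$ agree on a generating $\pi$-system and hence, by the monotone class theorem, coincide on all of $\hat\cF_t$. Equivalently, for every nonnegative $\hat\cF_t$-measurable $F$,
\[
  \hat E_{a,b}[F]=\frac{1}{p_1(a,b)}\,\hat E_a\!\left[F\cdot p_{1-t}(\om_t,b)\right].
\]

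Finally, taking $F=1_A\,1_{\{\om_t\in D\}}$ with $A\in\hat\cF_t$ and using $p_{1-t}(\om_t,b)\le\sup_{y\in D}p_{1-t}(y,b)$ on $\{\om_t\in D\}$, together with the identity $\hat E_a[1_A\,1_{\{\om_t\in D\}}]=\hat P_a[A\cap\{\om_t\in D\}]$, I obtain
\[
  \hat P_{a,b}[A\cap\{\om_t\in D\}]\le\hat P_a[A\cap\{\om_t\in D\}]\cdot\sup_{y\in D}\frac{p_{1-t}(y,b)}{p_1(a,b)},
\]
which is the assertion. There is no serious obstacle here; the only step demanding a little care is the passage from cylinder events to the full $\sg$-field, and this is exactly what the monotone class theorem --- the tool the paper names --- provides.
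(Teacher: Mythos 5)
Your proof is correct and follows exactly the route the paper indicates: the paper gives no written-out proof, remarking only that the lemma ``is proved by the definition of $\hat P_{a,b}$ and the monotone class theorem,'' which is precisely your argument --- identify the density $p_{1-t}(\om_t,b)/p_1(a,b)$ of $\hat P_{a,b}$ relative to $\hat P_a$ on cylinder sets, extend to all of $\hat\cF_t$ by the monotone class theorem, and bound the density by its supremum over $D$ on the event $\{\om_t\in D\}$. No gaps; the cylinder computation and the $\pi$-system extension are carried out correctly.
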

\begin{lmm}\label{lem:B}
Let $\tau$ be an $\{\hat\cF_t\}$-stopping time and $A\in \hat\cF_\tau$.
Let $D$ be an open set of $\R^d$. 
Then,
\[
\hat P_{a,b}[\{\tau<1\}\cap A\cap\{\om_\tau\in D\}]
\le \hat P_a[\{\tau<1\}\cap A\cap\{\om_\tau\in \overline{D}\}]
\cdot \sup_{t\in(0,1],\,y\in D}\frac{p_t(y,b)}{p_1(a,b)}.
\]
Here, $\overline{D}$ is a closure of $D$ in $\R^d$.
\end{lmm}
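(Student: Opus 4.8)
The plan is to deduce the estimate from Lemma~\ref{lem:A} by approximating the stopping time $\tau$ from above by discrete stopping times and then passing to the limit. Write $C_D=\sup_{t\in(0,1],\,y\in D}p_t(y,b)/p_1(a,b)$ for the constant appearing on the right-hand side, and for $n\in\N$ set $\tau_n=\lceil 2^n\tau\rceil/2^n$. Then each $\tau_n$ is an $\{\hat\cF_t\}$-stopping time taking values in $\{k2^{-n}\mid k\ge0\}$, we have $\tau_n\downarrow\tau$, and $A\in\hat\cF_\tau\subset\hat\cF_{\tau_n}$ since $\tau\le\tau_n$. Because paths are continuous, $\om_{\tau_n}\to\om_\tau$ pointwise on $W$. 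As Lemma~\ref{lem:A} requires a deterministic time strictly less than $1$, I would first fix $\eps\in(0,1)$ and work on $\{\tau\le1-\eps\}$, taking $n$ large enough that $2^{-n}<\eps$, so that $\tau_n\le1-\eps+2^{-n}<1$ there.

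For such $n$, I would decompose according to the value $t_k=k2^{-n}$ of $\tau_n$:
\[
\hat P_{a,b}\!\left[\{\tau\le1-\eps\}\cap A\cap\{\om_{\tau_n}\in D\}\right]
=\sum_{k:\,t_k<1}\hat P_{a,b}\!\left[A_k\cap\{\om_{t_k}\in D\}\right],
\]
where $A_k=\{\tau_n=t_k\}\cap\{\tau\le1-\eps\}\cap A$. A routine measurability check shows $A_k\in\hat\cF_{t_k}$ (using $\tau\le\tau_n=t_k$ on $\{\tau_n=t_k\}$), and on $A_k$ one has $\om_{\tau_n}=\om_{t_k}$. Applying Lemma~\ref{lem:A} to each term with $t=t_k<1$, bounding each supremum $\sup_{y\in D}p_{1-t_k}(y,b)/p_1(a,b)$ by $C_D$ (note $1-t_k\in(0,1]$), reassembling the sum into a single probability, and finally enlarging $D$ to $\overline D$, I obtain
\[
\hat P_{a,b}\!\left[\{\tau\le1-\eps\}\cap A\cap\{\om_{\tau_n}\in D\}\right]
\le C_D\,\hat P_a\!\left[\{\tau\le1-\eps\}\cap A\cap\{\om_{\tau_n}\in\overline D\}\right].
\]

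It then remains to let $n\to\infty$ and $\eps\downarrow0$. On the left, since $D$ is open and $\om_{\tau_n}\to\om_\tau$, one has $1_{\{\om_\tau\in D\}}\le\liminf_n 1_{\{\om_{\tau_n}\in D\}}$, so Fatou's lemma gives $\hat P_{a,b}[\{\tau\le1-\eps\}\cap A\cap\{\om_\tau\in D\}]\le\liminf_n(\text{left-hand side})$. On the right, since $\overline D$ is closed, $\limsup_n 1_{\{\om_{\tau_n}\in\overline D\}}\le 1_{\{\om_\tau\in\overline D\}}$, so the reverse Fatou lemma (the integrands are bounded by $1$) gives $\limsup_n(\text{right-hand side})\le\hat P_a[\{\tau\le1-\eps\}\cap A\cap\{\om_\tau\in\overline D\}]$. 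Combining these two and then letting $\eps\downarrow0$ by monotone convergence over $\{\tau\le1-\eps\}\uparrow\{\tau<1\}$ yields the claim. I expect the main obstacle to be precisely this passage from the deterministic-time estimate of Lemma~\ref{lem:A} to the random time $\tau$: the spatial event is evaluated at $\tau$ rather than at the dyadic times $t_k$, and the two opposite one-sided limits force the \emph{open} set $D$ on the bridge side and its \emph{closure} $\overline D$ on the Brownian side. This is exactly why the statement is asymmetric, with $D$ on the left and $\overline D$ on the right.
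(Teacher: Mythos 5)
Your proof is correct and takes essentially the same approach as the paper's: both discretize $\tau$ from above by countably valued stopping times $\tau_n\downarrow\tau$, apply Lemma~\ref{lem:A} at each discrete time $t_k<1$, and pass to the limit using the openness of $D$ on the bridge side (Fatou) and the closedness of $\overline{D}$ on the Brownian side (reverse Fatou). Your $\eps$-truncation to $\{\tau\le 1-\eps\}$ followed by monotone convergence is merely a bookkeeping variant of the paper's direct set-theoretic $\liminf$/$\limsup$ inclusions involving the events $\{\tau_n<1\}$.
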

\begin{proof}
Consider a sequence of $\{\hat\cF_t\}$-stopping times $\{\tau_n\}$ such that each $\tau_n$ takes only finite values of $\{t_n^{(k)}\}_{k\in\Lambda_n}$ and $\tau_n\downarrow\tau$.
Here, $\Lambda_n$ is an index set consisting of finite elements.
Then,
\begin{align*}
\{\tau<1\}\cap A\cap\{\om_{\tau}\in D\}
&\subset\liminf_{n\to\infty}
(\{\tau_n<1\}\cap A\cap\{\om_{\tau_n}\in D\})\\
&\subset\limsup_{n\to\infty}
(\{\tau_n<1\}\cap A\cap\{\om_{\tau_n}\in D\})\\
&\subset \{\tau<1\}\cap A\cap\{\om_{\tau}\in \overline{D}\}.
\end{align*}
For $t\in[0,1)$, $\{\tau_n=t\}\cap A\cap \{\om_{\tau_n}\in D\}\in\hat \cF_t$.
Therefore, from Lemma~\ref{lem:A},
\begin{align*}
&\hat P_{a,b}[\{\tau_n<1\}\cap A\cap \{\om_{\tau_n}\in D\}]\\
&=\sum_{k\in \Lambda_n,\,t_n^{(k)}<1}\hat P_{a,b}[\{\tau_n=t_n^{(k)}\}\cap A\cap\{\om_{\tau_n}\in D\}]\\
&\le\sum_{k\in \Lambda_n,\,t_n^{(k)}<1}\hat P_{a}[\{\tau_n=t_n^{(k)}\}\cap A\cap\{\om_{\tau_n}\in D\}]
\cdot\sup_{y\in D}\frac{p_{1-t_n^{(k)}}(y,b)}{p_1(a,b)}\\
&\le\sum_{k\in \Lambda_n,\,t_n^{(k)}<1}\hat P_{a}[\{\tau_n=t_n^{(k)}\}\cap A\cap\{\om_{\tau_n}\in D\}]
\cdot\sup_{t\in(0,1],\,y\in D}\frac{p_t(y,b)}{p_1(a,b)}\\
&=\hat P_a[\{\tau_n<1\}\cap A\cap\{\om_{\tau_n}\in D\}]
\cdot\sup_{t\in(0,1],\,y\in D}\frac{p_t(y,b)}{p_1(a,b)}.
\end{align*}
By letting $n\to\infty$, we complete the proof from Fatou's lemma.
\end{proof}
Denote the Borel $\sg$-field on $[0,\infty)$ by ${\cal B}([0,\infty))$.
\begin{lmm}\label{lem:C}
Let $\tau$ be an $\{\hat\cF_t\}$-stopping time such that $\tau\le1$
and $\Gm\subset [0,\infty)\times W$ an element in ${\cal B}([0,\infty))\otimes\hat\cF_\infty$.
Assume that 
\[
\{\tau<1\}\cap\{((1-\tau)/2,\theta_\tau\om)\in\Gm\}\in \hat\cF_{(1+\tau)/2}.
\]
Then, for a Borel set $D$ of $\R^d$,
\begin{align*}
&\hat P_{a,b}[\{\tau<1\}\cap\{\om_\tau\in D\}\cap
\{((1-\tau)/2,\theta_\tau\om)\in\Gm\}]\\
&\le \sup_{x\in D}\hat E_x\left[\sup_{s\in(0,1/2]}1_\Gm(s,\om)\cdot s^{-d/2}\right]\exp(|a-b|_{\R^d}^2/2).
\end{align*}
\end{lmm}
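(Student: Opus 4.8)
The plan is to move the whole computation from the pinned law $\hat P_{a,b}$ to the free Brownian law $\hat P_a$, after which the strong Markov property makes the appearance of $\hat E_x$ and the supremum over $s$ completely transparent. The role of the shift $(1+\tau)/2$ is exactly this: it pushes the ``$\Gm$-event'' far enough into the future of $\tau$ to be resolved by time $(1+\tau)/2<1$, while leaving a gap of length $(1-\tau)/2=s$, whose reciprocal square root will supply the weight $s^{-d/2}$. Below, $\hat E_{a,b}$ denotes expectation under $\hat P_{a,b}$, and I abbreviate
\[
E=\{\tau<1\}\cap\{\om_\tau\in D\}\cap\{((1-\tau)/2,\theta_\tau\om)\in\Gm\},\qquad Z_t:=\frac{p_{1-t}(\om_t,b)}{p_1(a,b)}.
\]

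\emph{Step 1 (reduction to $\hat P_a$).} Set $\sg=(1+\tau)/2$. Since $\tau$ is a stopping time and $t\mapsto(1+t)/2$ is continuous and nondecreasing, $\sg$ is again an $\{\hat\cF_t\}$-stopping time with $\sg\ge\tau$, hence $\hat\cF_\tau\subset\hat\cF_\sg$. The hypothesis gives $\{\tau<1\}\cap\{((1-\tau)/2,\theta_\tau\om)\in\Gm\}\in\hat\cF_\sg$, and $\{\om_\tau\in D\}\in\hat\cF_\tau\subset\hat\cF_\sg$, so $E\in\hat\cF_\sg$; moreover on $E$ one has $\sg=(1+\tau)/2<1$, i.e.\ $E\subset\{\sg<1\}$. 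The key identity is the stopping-time form of Lemma~\ref{lem:A}, namely that on $\hat\cF_\sg$ (restricted to $\{\sg<1\}$) the density of $\hat P_{a,b}$ with respect to $\hat P_a$ is $Z_\sg$, so that $\hat P_{a,b}[E]=\hat E_a[1_E\,Z_\sg]$. Granting this, I bound the kernel pointwise by its maximum, $p_{1-\sg}(\om_\sg,b)\le(2\pi(1-\sg))^{-d/2}$, and insert $1-\sg=(1-\tau)/2=:s$ together with $p_1(a,b)=(2\pi)^{-d/2}\exp(-|a-b|_{\R^d}^2/2)$ to obtain $Z_\sg\le s^{-d/2}\exp(|a-b|_{\R^d}^2/2)$ on $E$. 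Therefore
\[
\hat P_{a,b}[E]\le\exp(|a-b|_{\R^d}^2/2)\,\hat E_a\Big[1_{\{\tau<1\}}1_{\{\om_\tau\in D\}}\,1_\Gm(s,\theta_\tau\om)\,s^{-d/2}\Big],\qquad s=(1-\tau)/2.
\]

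\emph{Step 2 (strong Markov at $\tau$ under $\hat P_a$).} Everything is now under the free law, so I condition on $\hat\cF_\tau$. The factors $1_{\{\tau<1\}}$, $1_{\{\om_\tau\in D\}}$ and $s=(1-\tau)/2$ are $\hat\cF_\tau$-measurable, while by the strong Markov property $\theta_\tau\om$ has conditional law $\hat P_{\om_\tau}$, whence
\[
\hat E_a\big[1_\Gm(s,\theta_\tau\om)\,s^{-d/2}\mid\hat\cF_\tau\big]=\hat E_x\big[1_\Gm(s',\om)\,(s')^{-d/2}\big]\Big|_{x=\om_\tau,\ s'=(1-\tau)/2}.
\]
On $\{\tau<1\}$ one has $s'=(1-\tau)/2\in(0,1/2]$, so this is at most $\hat E_{\om_\tau}\big[\sup_{s\in(0,1/2]}1_\Gm(s,\om)s^{-d/2}\big]$, and on $\{\om_\tau\in D\}$ at most $\sup_{x\in D}\hat E_x[\sup_{s\in(0,1/2]}1_\Gm(s,\om)s^{-d/2}]$. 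Substituting this back and using $\hat P_a[\{\tau<1\}\cap\{\om_\tau\in D\}]\le1$ yields exactly the asserted inequality.

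\emph{Main obstacle (the density transfer).} The only nonformal point is the identity $\hat P_{a,b}[E]=\hat E_a[1_E\,Z_\sg]$ used in Step~1. I would first record the fixed-time equality $\hat E_{a,b}[F]=\hat E_a[F\,Z_t]$ for $\hat\cF_t$-measurable $F\ge0$ and $t<1$ — the equality counterpart of Lemma~\ref{lem:A}, proved in the same way from the finite-dimensional definition of $\hat P_{a,b}$ and the monotone class theorem — together with the fact that $\{Z_t\}$ is a nonnegative $(\hat\cF_t,\hat P_a)$-martingale on each $[0,s_0]$, $s_0<1$, since $(t,x)\mapsto p_{1-t}(x,b)$ is space-time harmonic for the heat semigroup. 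For a bounded stopping time $\sg\wedge s_0$ the optional sampling theorem upgrades this to $\hat E_{a,b}[G]=\hat E_a[G\,Z_{\sg\wedge s_0}]$ for $\hat\cF_{\sg\wedge s_0}$-measurable $G\ge0$. The delicate part is that $\sg$ is only $<1$, not bounded away from $1$, and $\{Z_t\}$ is not uniformly integrable as $t\uparrow1$; I would circumvent this by taking $G=1_E\,1_{\{\sg\le s_0\}}$ (so that $Z_{\sg\wedge s_0}=Z_\sg$ on $\{\sg\le s_0\}$) and then letting $s_0\uparrow1$, the monotone convergence theorem being available because $E\subset\{\sg<1\}$ forces $1_E\,1_{\{\sg\le s_0\}}\uparrow1_E$ and $Z_\sg\ge0$. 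This localization away from $t=1$ is the one genuinely technical ingredient; the remaining steps are routine conditioning.
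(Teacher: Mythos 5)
Your proof is correct, and it takes a genuinely different route from the paper's. The paper avoids the density martingale entirely: for $c\in(0,1)$ it slices $\{\tau<1\}$ into the events $\{c^{k+1}<1-\tau\le c^k\}$, notes that on each slice the event in question lies in $\hat\cF_{1-c^{k+1}/2}$ (a deterministic time $<1$) thanks to the hypothesis, applies the fixed-time inequality of Lemma~\ref{lem:A} slice by slice with the crude Gaussian bound $\sup_y p_{c^{k+1}/2}(y,b)\le(\pi c^{k+1})^{-d/2}$, resums using $c^{k+1}\ge c(1-\tau)$ to produce the weight $(2\pi cs)^{-d/2}$ with $s=(1-\tau)/2$, performs the same strong Markov step as your Step 2, and finally removes the loss factor $c^{-d/2}$ by letting $c\to1$. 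Your optional-sampling identity $\hat P_{a,b}[E]=\hat E_a[1_E Z_\sg]$ proves in one stroke exactly what this slicing-plus-limit argument assembles by hand, and you correctly isolated the one delicate point — $Z$ is not uniformly integrable as $t\uparrow1$ — and resolved it properly via $E\subset\{\sg<1\}$, localization at $\sg\wedge s_0$, and monotone convergence; your observation that $1_E1_{\{\sg\le s_0\}}$ is $\hat\cF_{\sg\wedge s_0}$-measurable is the right one. What each approach buys: the paper's argument needs only the stated one-sided Lemma~\ref{lem:A} at deterministic times and no martingale theory, at the price of the artificial parameter $c$; yours needs the equality form of Lemma~\ref{lem:A} (unstated in the paper, but provable by the identical monotone class argument, as you note) plus optional sampling in the raw filtration (routine, by discrete approximation of $\sg\wedge s_0$ from above, since $Z$ is continuous on $[0,1)$), and in exchange is shorter, sharp without any limiting parameter, and makes the mechanism $Z_\sg\le s^{-d/2}\exp(|a-b|_{\R^d}^2/2)$ transparent. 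One micro-optimization: the paper first bounds $1_\Gm(s,\theta_\tau\om)s^{-d/2}$ by the fixed functional $\bigl(\sup_{s\in(0,1/2]}1_\Gm(s,\cdot)s^{-d/2}\bigr)\circ\theta_\tau$ and only then applies the plain strong Markov property, thereby sidestepping the joint measurability of $(s',x)\mapsto\hat E_x[1_\Gm(s',\om)]$ that your randomized-shift version of the strong Markov property implicitly requires; swapping those two steps in your Step 2 would remove that (minor) technical debt.
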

\begin{proof}
Let $c\in(0,1)$. Then,
\begin{align*}
&\hat P_{a,b}[\{\tau<1\}\cap\{\om_\tau\in D\}\cap
\{((1-\tau)/2,\theta_\tau\om)\in\Gm\}]\\
&=\sum_{k=0}^\infty\hat P_{a,b}[\{c^{k+1}<1-\tau\le c^k\}\cap\{\om_\tau\in D\}\cap
\{((1-\tau)/2,\theta_\tau\om)\in\Gm\}].
\end{align*}
Since $c^{k+1}<1-\tau\le c^k$ implies that $\tau< 1-c^{k+1}$ and $(1+\tau)/2<1-c^{k+1}/2$, by combining the assumption,
the set in $\hat P_{a,b}[{}\cdots{}]$ belongs to $\hat\cF_{1-c^{k+1}/2}$.
From Lemma~\ref{lem:A} and the strong Markov property, the above equation is dominated by
\begin{align*}
&\sum_{k=0}^\infty\hat P_a[\{c^{k+1}<1-\tau\le c^k\}\cap\{\om_\tau\in D\}\cap
\{((1-\tau)/2,\theta_\tau\om)\in\Gm\}]\\
&\phantom{\sum_{k=0}^\infty}\cdot p_1(a,b)^{-1}(\pi c^{k+1})^{-d/2}\\
&\le \hat E_a[1_{\{\tau<1\}\cap\{\om_\tau\in D\}\cap
\{((1-\tau)/2,\theta_\tau\om)\in\Gm\}}\cdot (\pi(1-\tau)c)^{-d/2}]\cdot p_1(a,b)^{-1}\\
&\le \hat E_a\!\left[1_{\{\tau<1\}\cap\{\om_\tau\in D\}}
 \sup_{s\in(0,1/2]}\left(1_\Gm(s,\theta_\tau\om)\cdot (2\pi sc)^{-d/2}\right)\right]\cdot p_1(a,b)^{-1}\\
&= \hat E_a\!\left[1_{\{\tau<1\}\cap\{\om_\tau\in D\}}
\hat E_{\om_\tau}\!\left[\sup_{s\in(0,1/2]}1_\Gm(s,\om)\cdot (2\pi sc)^{-d/2}\right]\right]\cdot p_1(a,b)^{-1}\\
&\le \sup_{x\in D}
\hat E_x\!\left[\sup_{s\in(0,1/2]}1_\Gm(s,\om)\cdot s^{-d/2}\right]\cdot c^{-d/2}\exp(|a-b|_{\R^d}^2/2).
\end{align*}
By letting $c\to1$, we reach the conclusion.
\end{proof}
\begin{prp}\label{prop:2}
  There exists $C_7>0$ such that for every $r>0$,
  \begin{equation}\label{eq:r}
  \mu_{a,b}\left[w\in W_{a,b}\left|\;0\le\inf_{t\in[0,1]}q(w(t))\le r\right]\right.
  \le C_7 r.
  \end{equation}
\end{prp}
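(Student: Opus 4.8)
The plan is to reduce the pinned estimate to the free-Brownian estimate of Proposition~\ref{prop:path} by splitting $[0,1]$ at its midpoint, so that every event is measurable with respect to a $\sigma$-field at a time strictly inside $(0,1)$. Since the event in \eqref{eq:r} depends only on $\{w(t)\}_{t\in[0,1]}$, I may compute under $\hat P_{a,b}$ with $\om_t=w(t)$, and it suffices to bound $\hat P_{a,b}[E_r]$, where $E_r=\{0\le\inf_{t\in[0,1]}q(\om_t)\le r\}$; I may also assume $a,b\in\Om$, since otherwise $E_r$ is empty for $r$ small. The first step is the elementary covering
\[
E_r\subset A\cup B,\qquad
A=\Bigl\{0\le\inf_{t\in[0,1/2]}q(\om_t)\le r\Bigr\},\quad
B=\Bigl\{0\le\inf_{t\in[1/2,1]}q(\om_t)\le r\Bigr\},
\]
which holds because $\inf_{t\in[0,1]}q(\om_t)=\min\{\inf_{t\in[0,1/2]}q(\om_t),\,\inf_{t\in[1/2,1]}q(\om_t)\}$, while $E_r$ forces $q(\om_t)\ge0$ throughout $[0,1]$, hence on each half.

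The crucial point for $A$ is that it is $\hat\cF_{1/2}$-measurable. Applying Lemma~\ref{lem:A} with $t=1/2$ and $D=\R^d$ gives
\[
\hat P_{a,b}[A]\le \hat P_a[A]\cdot\sup_{y\in\R^d}\frac{p_{1/2}(y,b)}{p_1(a,b)}
=\hat P_a[A]\cdot\frac{\pi^{-d/2}}{p_1(a,b)},
\]
since $\sup_{y\in\R^d}p_{1/2}(y,b)=p_{1/2}(b,b)=\pi^{-d/2}$. Proposition~\ref{prop:path} with $u=1/2$ and $x=a\in\Om$ then yields $\hat P_a[A]\le C_6(1+\sqrt2)\,r$, so that $\hat P_{a,b}[A]\le C\,r$ with $C=C_6(1+\sqrt2)\pi^{-d/2}p_1(a,b)^{-1}$.

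For $B$ I would invoke the time-reversal invariance of the pinned measure: the process $\{\om_{1-t}\}_{t\in[0,1]}$ under $\hat P_{a,b}$ has law $\hat P_{b,a}$, which follows at once from the symmetry $p_t(x,y)=p_t(y,x)$ in the finite-dimensional densities. Under this reversal the set $B$ becomes $\{0\le\inf_{t\in[0,1/2]}q(\om_t)\le r\}$, so the previous paragraph applied with $a$ and $b$ interchanged (using $b\in\Om$ and $p_1(a,b)=p_1(b,a)$) gives $\hat P_{a,b}[B]\le C\,r$. Summing the two bounds yields \eqref{eq:r} with $C_7=2C_6(1+\sqrt2)\pi^{-d/2}p_1(a,b)^{-1}$.

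The main obstacle is the singularity of the pinned measure at the endpoints: the Radon--Nikodym factor $p_{1-t}(y,b)/p_1(a,b)$ that relates $\hat P_{a,b}$ to $\hat P_a$ on $\hat\cF_t$ in Lemma~\ref{lem:A} blows up as $t\uparrow1$, so one cannot control the infimum over all of $[0,1]$ in a single pass. The midpoint split is precisely what confines each use of Lemma~\ref{lem:A} to the fixed time $t=1/2$, bounded away from the dangerous endpoint, where the factor $u^{-1/2}=\sqrt2$ from Proposition~\ref{prop:path} stays finite; the exterior-ball geometry and the constant drift $K$ have already been absorbed into Proposition~\ref{prop:path}. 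I note that attempting instead to handle $B$ by a forward stopping time $\tau_r$ together with a shift and Lemma~\ref{lem:C} would fail, because the ``stay in $\overline\Om$'' event has probability near $1$ over short times and the weight $s^{-d/2}$ in Lemma~\ref{lem:C} then diverges as $s\to0$; this is why the symmetric midpoint argument, rather than the refined machinery of Lemmas~\ref{lem:B} and~\ref{lem:C}, is the natural route here.
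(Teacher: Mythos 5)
Your proof is correct, and it takes a genuinely different --- and substantially shorter --- route than the paper's. The paper does not split $[0,1]$ at the midpoint: it stops the path at $\tau_r$, the first exit time from $\Om_r$, and decomposes according to whether the path approaches the terminal point $b$ during $[\tau_r,(\tau_r+1)/2]$. The piece staying in $\overline{\Om}\setminus V$ (with $V=B(b,\alpha)$, $\alpha=\min\{q(a),q(b)\}/2$) is handled by Lemma~\ref{lem:B}, where the bridge density factor $p_t(y,b)/p_1(a,b)$ stays bounded because the path is kept away from $b$; the piece that does enter $V$ is handled by Lemma~\ref{lem:C} together with a two-sided exit estimate for the one-dimensional Brownian motion with drift $K$ from Lemma~\ref{lem:ito} and \cite{BS}, the exponential smallness of reaching $V$ from $\partial\Om_r$ in a short time $s$ while staying in $\overline{\Om}$ beating the weight $s^{-d/2}$. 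Your time-reversal observation --- that $(\om_{1-t})_{t\in[0,1]}$ under $\hat P_{a,b}$ has law $\hat P_{b,a}$ --- removes the endpoint singularity at one stroke and reduces everything to Lemma~\ref{lem:A} at the fixed interior time $1/2$ plus Proposition~\ref{prop:path}; with your argument, Lemmas~\ref{lem:B} and~\ref{lem:C} (which the paper uses nowhere else) and the drifted exit-time computation become unnecessary for this proposition, and you even get \eqref{eq:r} for all $r>0$ without the paper's restriction $r<\alpha/3$. What the paper's heavier route buys is robustness: it exploits no symmetry of the reference measure, whereas your reduction is special to the reversibility of the Brownian bridge, which matters in light of the authors' stated hope that the method extend to more general situations. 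Two small repairs: first, like the paper (whose proof needs $\alpha>0$), you implicitly assume $a,b\in\Om$; your remark that otherwise $E_r$ is ``empty for $r$ small'' is not quite right when $a\in\partial\Om$ or $b\in\partial\Om$ --- there $E_r$ is nonempty but $\mu_{a,b}$-null, by Proposition~\ref{prop:1} with $q(a)=0$ (or $q(b)=0$, after your own reversal) transferred through Lemma~\ref{lem:A}. Second, your closing claim that the Lemma~\ref{lem:C} route ``would fail'' misdescribes the paper's actual use of it: the set $\Gm$ there is not the bare stay-in-$\overline{\Om}$ event but its intersection with the event of hitting $V$ within time $s$, and it is the exponential smallness of that hitting probability from $\partial\Om_r$ that dominates $s^{-d/2}$; so the paper's route succeeds, it is merely more elaborate than yours.
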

\begin{proof}
  Let $\alpha=\min\{q(a),q(b)\}/2$.
  It is sufficient to prove that there exists $C_7>0$ such that \eqref{eq:r} holds for all $r\in(0,\alpha/3)$.
  Choose $r\in(0,\alpha/3)$ and let $V=B(b,\alpha)$ and $V'=B(b,\alpha/2)$. Then,
  \begin{align*}
  &\mu_{a,b}\left[w\in W_{a,b}\left|\ 0\le\inf_{t\in[0,1]}q(w(t))\le r\right.\right]\\
  &\le \hat P_{a,b}[\tau_r<1\text{ and }\om_t\in\overline{\Om}\setminus V \text{ for all }t\in[\tau_r,(\tau_r+1)/2]]\\
  &\phantom{{}\le{}}{}+ \hat P_{a,b}\left[\begin{array}{l}\tau_r<1,\ \om_t\in V \text{ for some }t\in[\tau_r,(\tau_r+1)/2],\\
  \text{and } 
  \om_t\in\overline{\Om} \text{ for all }t\in[\tau_r,(\tau_r+1)/2]
  \end{array}
  \right]\\
  &=:I_1+I_2.
\end{align*}
For $I_1$, Lemma~\ref{lem:B} with $\tau=\min\{(\tau_r+1)/2, 1\}$ implies
\begin{align*}
  I_1&= \hat P_{a,b}[\tau_r<1,\ \om_t\in\overline{\Om}\setminus V \text{ for all }t\in[\tau_r,\tau],\ \text{and }\om_\tau\in\R^d\setminus V']\\
  &\le\hat P_a[\tau_r<1,\ \om_t\in\overline{\Om}\setminus V \text{ for all }t\in[\tau_r,\tau]]\cdot \sup_{t\in(0,1],\ y\in\R^d\setminus V'}\frac{p_t(y,b)}{p_1(a,b)}.
\end{align*}
Now,
  \[
  \sup_{t\in(0,1],\ y\in\R^d\setminus V'}\frac{p_t(y,b)}{p_1(a,b)}
  \le p_1(a,b)^{-1}\sup_{t\in(0,1]}(2\pi t)^{-d/2}\exp\left(-\frac{(\alpha/2)^2}{2t}\right)
  \le C_8
  \]
  and
  \begin{align*}
  &\hat P_a[\tau_r<1,\ \om_t\in\overline{\Om}\setminus V \text{ for all }t\in[\tau_r,\tau]]\\
  &\le \hat P_a[\tau_r<1,\ (\theta_{\tau_r} \om)_t\in\overline{\Om} \text{ for all }t\in[0,(1-\tau_r)/2]]\\
  &\le \sum_{k=1}^\infty \hat P_a[2^{-k}<1-\tau_r\le 2^{-k+1},\ (\theta_{\tau_r} \om)_t\in\overline{\Om} \text{ for all }t\in[0,2^{-k-1}]].
  \end{align*}
  Since
  \begin{align*}
  &\hat P_a[2^{-k}<1-\tau_r\le 2^{-k+1},\ (\theta_{\tau_r} \om)_t\in\overline{\Om} \text{ for all }t\in[0,2^{-k-1}]\mid \hat\cF_{\tau_r}]\\
  &=1_{\{2^{-k}<1-\tau_r\le 2^{-k+1}\}}\cdot\hat P_{\om_{\tau_r}}[\om_t\in\overline{\Om} \text{ for all }t\in[0,2^{-k-1}]]\\
  &\le 1_{\{2^{-k}<1-\tau_r\le 2^{-k+1}\}}\cdot C_1(1+2^{(k+1)/2})r\\
  &\le 1_{\{2^{-k}<1-\tau_r\le 2^{-k+1}\}}\cdot C_1(1+2(1-\tau_r)^{-1/2})r
  \end{align*}
from the strong Markov property and Proposition~\ref{prop:1},
  \begin{align*}
  I_1&\le C_8C_1r \hat E_a[1_{\{\tau_r<1\}}\cdot(1+2(1-\tau_r)^{-1/2})]\\
  &\le C_8C_1 r(1+2\sqrt2)\hat P_a[\tau_r\le 1/2]
  +C_8C_1 r \int_{1/2}^1 (1+2(1-s)^{-1/2})\cdot C_2 s^{-1}\,ds\\
  &\le C_9 r,
  \end{align*}
  by virtue of Lemma~\ref{lem:differentiable}.
  
  We will estimate a value for $I_2$.
  From Lemma~\ref{lem:C} with $\tau=\min\{\tau_r,1\}$, $D=\partial\Om_r$, and
  \[
  \Gm=\left\{(s,\om)\in[0,\infty)\times W\left|\;
  \begin{array}{l}
  \om_t\in V \text{ for some }t\in[0,s]\text{ and}\\
  \om_t\in\overline{\Om} \text{ for all }t\in[0,s]
  \end{array}\right.\right\},
  \]
  we obtain
  \[
  I_2\le C_{10} \sup_{x\in \partial\Om_r}\hat E_x\left[\sup_{s\in(0,1/2]}1_\Gm(s,\om)\cdot s^{-d/2}\right].
  \]
  By letting $f(\om)=\sup_{s\in(0,1/2]}1_\Gm(s,\om)\cdot s^{-d/2}$, we have
  \begin{equation}\label{eq:I2}
  I_2
  \le C_{10}\sup_{x\in\partial\Om_r}\hat E_x[f]
  =C_{10} \sup_{x\in\partial\Om_r}\int_0^\infty \hat P_x[f> u]\,du.
  \end{equation}
  Let $x\in\partial\Om_r$, and define $y$, $z$, $K$, $R_t$, and $S_t$ as in Lemma~\ref{lem:ito}.
  We have $|x-z|_{\R^d}=\dl+r\in(\dl,\dl+\alpha)$ and $|b-z|_{\R^d}\ge\dl+q(b)\ge\dl+2\alpha$.
  Define the stopping times with respect to the canonical augmentation of $\{\hat\cF_t\}$ by $\{\hat P_x\}_{x\in\R^d}$ as follows:
  \begin{align*}
  \rho&=\inf\{t\ge0\mid R_t\notin [\dl, \dl+\alpha)\},\\
  \rho'&=\inf\{t\ge0\mid r+\dl+Kt+S_t\notin [\dl,\dl+\alpha)\}.
  \end{align*}
  Since $\om_t\in V$ implies $R_t\ge|b-z|_{\R^d}-\alpha\ge\dl+\alpha$,
  \begin{align*}
  &\hat P_x\left[f> u\right]\\
  &\le\hat P_x[(s,\om)\in\Gm \text{ for some }s< u^{-2/d}]\\
  &\le \hat P_x[\rho< u^{-2/d}\text{ and }R_\rho=\dl+\alpha]\\
  &\le \hat P_x[\rho'< u^{-2/d}\text{ and }r+\dl+K\rho'+S_{\rho'}=\dl+\alpha]
  \quad\text{(from Lemma~\ref{lem:ito})}\\
  &\le \hat E_x[\exp(1-u^{2/d}\rho');\, r+K\rho'+S_{\rho'}=\alpha]\\
  &=e^{1+K(\alpha-r)}{\sinh\left(r\sqrt{2u^{2/d}+K^2}\right)}\left/{\sinh\left(\alpha\sqrt{2u^{2/d}+K^2}\right)}\right..
  \quad \text{(cf.\ \cite[p.~309]{BS})}
  \end{align*}
  Since $(v/4)e^{v/2}\le \sinh v\le v e^v$ for $v\ge0$, the above term is dominated by
  \begin{align*}
  &\frac{e^{1+K(\alpha-r)}r\sqrt{2u^{2/d}+K^2}\exp\left(r\sqrt{2u^{2/d}+K^2}\right)}{(\alpha/4)\sqrt{2u^{2/d}+K^2}\exp\left(\alpha\sqrt{2u^{2/d}+K^2}/2\right)}\\
  &=4\alpha^{-1}e^{1+K(\alpha-r)}r\exp\left((r-\alpha/2)\sqrt{2u^{2/d}+K^2}\right)\\
  &\le 4\alpha^{-1}e^{1+K\alpha}r\exp(-\sqrt{2}\alpha u^{1/d}/6).
  \end{align*}
Substituting this estimate into \eqref{eq:I2},
we obtain $I_2\le C_{11} r$.
  This completes the proof.
\end{proof}
\section{Proof of Theorem~\ref{th:main}}
In this section, we prove Theorem~\ref{th:main}.
We retain the notations in the previous sections.
We will utilize the following theorem.
\begin{thm}[(\cite{ES})]\label{th:ES}
Let $F$ be a measurable function on $W_{a,b}$ and $H_0$-Lipschitz; in other words, there exists $C>0$ such that
\[
|F(w+h)-F(w)|\le C|h|_{H_0},\quad
w\in W_{a,b},\ h\in H_0.
\]
Then, if $\int_{W_{a,b}} F^2\,d\mu_{a,b}<\infty$, $F$ belongs to $\D^{1,2}$.
Here, $\D^{1,2}$ is a first order $L^2$-Sobolev space on $W_{a,b}$ that is defined as the completion of $\cF C_b^1$ with respect to the norm $(\|\nabla\cdot\|_{L^2(W_{a,b}\to H_0,\mu_{a,b})}^2+\|\cdot\|_{L^2(W_{a,b},\mu_{a,b})}^2)^{1/2}$.
Moreover, $|\nabla F(\om)|_{H_0}\le C$ $\mu_{a,b}$-a.e.
\end{thm}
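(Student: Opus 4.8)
The plan is to regularize $F$ with the Ornstein--Uhlenbeck semigroup, for which the $H_0$-Lipschitz bound transfers cleanly to a gradient bound, and then to recover $F$ and its gradient by a weak-compactness and closedness argument. First I would transport the problem to the abstract Wiener space $(W_{0,0},H_0,\mu_{0,0})$ via the shift $\lm_{a,b}$: writing $\tilde F=F\circ\lm_{a,b}^{-1}$, one checks from $\lm_{a,b}^{-1}(\tilde w)=\tilde w+h_{a,b}$ that $\tilde F$ is $H_0$-Lipschitz with the same constant $C$ and that $\|\tilde F\|_{L^2(\mu_{0,0})}=\|F\|_{L^2(\mu_{a,b})}$. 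So it suffices to work on a generic abstract Wiener space $(E,H,\mu)$; I keep the name $F$ for the transported function. For $t>0$ let $T_t$ be the Ornstein--Uhlenbeck semigroup, given by Mehler's formula
\[
T_tF(x)=\int_E F(e^{-t}x+\sqrt{1-e^{-2t}}\,y)\,\mu(dy).
\]
I would use two standard facts: since $F\in L^2(E)$, the smoothing property of $T_t$ gives $T_tF\in\D^{1,2}$ for every $t>0$ (the chaos expansion bounds $\|\nabla T_tF\|_{L^2(E\to H)}^2$ by $(\sup_{n\ge1}n\,e^{-2nt})\|F\|_{L^2}^2<\infty$); and $T_tF\to F$ in $L^2(E)$ as $t\downarrow0$.

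The core step is to push the Lipschitz bound through Mehler's formula. For $h\in H$ the two arguments $e^{-t}(x+h)+\sqrt{1-e^{-2t}}\,y$ and $e^{-t}x+\sqrt{1-e^{-2t}}\,y$ differ by $e^{-t}h\in H$, so the $H_0$-Lipschitz hypothesis gives, pointwise in $y$,
\[
\bigl|F(e^{-t}(x+h)+\sqrt{1-e^{-2t}}\,y)-F(e^{-t}x+\sqrt{1-e^{-2t}}\,y)\bigr|\le C e^{-t}|h|_H,
\]
and integrating in $y$ yields $|T_tF(x+h)-T_tF(x)|\le C e^{-t}|h|_H$; thus $T_tF$ is $H$-Lipschitz with constant $\le C$. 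Since $T_tF\in\D^{1,2}$, for each fixed $h$ the difference quotients $(T_tF(\cdot+\eps h)-T_tF)/\eps$ converge to $\la\nabla T_tF,h\ra_H$ in $L^2(E)$ as $\eps\to0$; being bounded in absolute value by $C e^{-t}|h|_H$, the limit inherits this bound, so $|\la\nabla T_tF,h\ra_H|\le C|h|_H$ $\mu$-a.e., and ranging $h$ over a countable dense subset of $H$ gives $|\nabla T_tF|_H\le C$ $\mu$-a.e.

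Finally I would extract $F$ itself. The family $\{\nabla T_tF\}_{0<t\le1}$ is bounded in the Hilbert space $L^2(E\to H)$, so along some $t_n\downarrow0$ it converges weakly to a limit $G$. Since also $T_{t_n}F\to F$ in $L^2(E)$, the pairs $(T_{t_n}F,\nabla T_{t_n}F)$ converge weakly to $(F,G)$ in $L^2(E)\times L^2(E\to H)$; the graph of $\nabla$ is a closed, hence weakly closed, linear subspace of this product, so $(F,G)$ belongs to it, giving $F\in\D^{1,2}$ and $\nabla F=G$. The one genuinely delicate point is the pointwise bound, because a weak limit need not inherit an a.e.\ estimate: here I would invoke that the set $\{G\in L^2(E\to H):|G|_H\le C\ \mu\text{-a.e.}\}$ is convex and strongly closed, hence weakly closed; as every $\nabla T_{t_n}F$ lies in it, so does $\nabla F$, whence $|\nabla F|_H\le C$ $\mu$-a.e. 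Transporting back through $\lm_{a,b}$ recovers the assertion on $W_{a,b}$.
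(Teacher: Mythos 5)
Your argument is correct, but note that the paper itself contains no proof of this statement: Theorem~\ref{th:ES} is quoted from Enchev--Stroock \cite{ES}, whose original argument runs through finite-dimensional approximation --- conditioning $F$ on finitely many coordinates, applying the classical Rademacher theorem in $\R^n$ to the resulting finite-dimensional $H$-Lipschitz functions, and passing to the limit by martingale convergence together with the closedness of $\nabla$. Your route replaces the finite-dimensional conditioning by Ornstein--Uhlenbeck smoothing: Mehler's formula transfers the everywhere-Lipschitz hypothesis directly to $T_tF$ (with the improved constant $Ce^{-t}$, since the shift enters as $e^{-t}h$), the chaos expansion gives $T_tF\in\D^{1,2}$ for free, and both approaches then share the same endgame --- uniform $L^2$-bounds on the gradients, weak compactness, weak closedness of the graph of $\nabla$, and weak closedness of the convex set $\{|G|_H\le C\ \mu\text{-a.e.}\}$ for the pointwise bound, which you correctly flag as the delicate point. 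The one step you should flesh out is the $L^2$-convergence of the difference quotients $(T_tF(\cdot+\eps h)-T_tF)/\eps$ to $\la\nabla T_tF,h\ra_{H_0}$: for a general element of $\D^{1,2}$ this needs the ray absolute continuity of Sobolev functions along Cameron--Martin directions (giving convergence in probability, which your uniform bound $Ce^{-t}|h|_{H_0}$ then upgrades to $L^2$), or alternatively one can test the quotients against $\phi\in\cF C_b^1$, shift the variable using the Cameron--Martin density, and identify the weak-$*$ limit with $\la\nabla T_tF,h\ra_{H_0}$, after which the a.e.\ bound follows as you say. A second minor point: $T_tF(x)$ is a priori defined only for $\mu$-a.e.\ $x$, but the everywhere-Lipschitz hypothesis makes the set $\{T_t|F|<\infty\}$ invariant under $H_0$-shifts, so your pointwise Lipschitz estimate for $T_tF$ holds on a full-measure $H_0$-invariant set, which suffices. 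With these two routine justifications supplied, your proof is complete and self-contained, which is more than the paper attempts; its only cost is invoking standard Malliavin-calculus machinery (chaos decomposition, closability of $\nabla$ on the completion $\D^{1,2}$) in place of the elementary finite-dimensional Rademacher input of \cite{ES}.
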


From Proposition~\ref{prop:2}, for any $r>0$,
\[
\mu_{a,b}\left[\overline{W_{a,b}^\Om}\setminus W_{a,b}^\Om\right]
=\mu_{a,b}\left[ \inf_{s\in[0,1]}
q(w(s))=0\right] 
\le C_7 r.
\]
Therefore, $\mu_{a,b}\left[\overline{W_{a,b}^\Om}\setminus W_{a,b}^\Om\right]=0$.
By combining this with the remark in \cite[p.~230]{Fu},
the bilinear form $(\cE',\cF C_b^1)$ is closable on $L^2(\overline{W_{a,b}^\Om},\mu_{a,b}|_{\overline{W_{a,b}^\Om}})$, and the closure $(\cE',\cF')$ is a quasi-regular, local, and conservative Dirichlet form.
In particular, we obtain the diffusion process $(X'_t,P'_x)$ on $\overline{W_{a,b}^\Om}$ associated with $(\cE',\cF')$.

Next, we prove that $1_{\overline{W_{a,b}^\Om}} \in BV(W_{a,b})$.
Define $F(w)=\inf_{t\in[0,1]}q(w(t))$ for $w\in W_{a,b}$.
For $n\in\N$, we define $f_n(s)=\min\{\max\{0, ns\},1\}$ for $s\in\R$ and $\rho_n(w)=f_n\left(F(w) \right)$ for $w\in W_{a,b}$.
Then, since ${W_{a,b}^\Om}=\{F(w)>0\}$, we obtain $\lim_{n\to \infty}\rho_n =1_{{W_{a,b}^\Om}}$ $\mu_{a,b}$-a.e.\ and in $L^1(W_{a,b},\mu_{a,b})$.
Therefore, from Theorem~\ref{th:1}, it is sufficient to prove
$\sup_n \|\nabla\rho_n\|_{L^1(W_{a,b}\to H_0,\mu_{a,b})} <\infty$.
We note that $q(x)$ is a Lipschitz function on $\R^d$ with Lipschitz constant 1; thus, we obtain the following estimate for $w\in W_{a,b}$ and $h\in H_0$
\begin{align*}
	|F(w+h)-F(w)| &= \left|\inf_{t\in[0,1]}q(w(t)+h(t))-\inf_{t\in[0,1]}q(w(t))\right|\\
	&\le \sup_{t\in[0,1]}|q(w(t)+h(t))-q(w(t))|\\
	&\le \sup_{t\in[0,1]}|h(t)|	
	\le |h|_{H_0}.
\end{align*}
Thus, $F$ is $H_0$-Lipschitz continuous.
From Theorem~\ref{th:ES}, we deduce that $F\in \D^{1,2}$ and $|\nabla F|_{H_0}\le 1$ $\mu_{a,b}$-a.e.

Now, we use the chain rule of $H_0$-derivative to obtain
\begin{align*}
	\|\nabla \rho_n\|_{L^1(W_{a,b}\to H_0,\mu_{a,b})} &\le \left\|n 1_{\{0\le F \le 1/n\}}|\nabla F|_{H_0}\right\|_{L^1(W_{a,b},\mu_{a,b})}\\
	&\le n\mu_{a,b}\left[0\le \inf_{t\in[0,1]}q(w(t)) \le \frac1n\right].
\end{align*}
According to Proposition~\ref{prop:2}, $\sup_n \|\nabla\rho_n\|_{L^1(W_{a,b}\to H_0,\mu_{a,b})} <\infty$.

By virtue of Theorem~\ref{th:2}, the process $X_t:=\lm_{a,b}(X'_t)$ satisfies the Skorokhod equation \eqref{eq:skorohod}.

Next, we will prove $\cp\left(\partial W_{a,b}^\Om\setminus\partial' W_{a,b}^\Om\right)=0$, where $\cp$ denotes the capacity associated with $(\cE',\cF')$.
When $w\in \partial W_{a,b}^\Om\setminus\partial' W_{a,b}^\Om$, there exist at least two points $t\in(0,1)$ such that $w(t)\in \partial\Om$.
Therefore,
\begin{align}\label{eq:cap2}
&\partial W_{a,b}^\Om\setminus\partial' W_{a,b}^\Om\\ &\subset \bigcup_{\begin{subarray}{c}0<s_1<s_2<1\\s_1,s_2\in{\Q}\end{subarray}}
\left\{ w\in W_{a,b}\left|\;\inf_{t\in[0,s_1]}q(w(t))=0,\ \inf_{t\in[s_1,s_2]}q(w(t))=0
\right\}\right.. \nonumber
\end{align}
For $\alpha,\beta \in \R$ and $s_1,s_2\in (0,1)$ with $s_1<s_2$, we define
\[A_{s_1,s_2,(\alpha ,\beta)} =\left\{w\in W_{a,b}\left|\;\inf_{t\in[0,s_1]}q(w(t))=\alpha,\ \inf_{t\in[s_1,s_2]}q(w(t))=\beta\right\}\right..\]
The right-hand side of \eqref{eq:cap2} is rewritten as $\bigcup_{0<s_1<s_2<1,\,s_1,s_2\in{\Q}} A_{s_1,s_2,(0,0)}$.
For a subset $G$ of $\R^2$, we denote $\bigcup_{(\alpha , \beta)\in G}A_{s_1,s_2,(\alpha ,\beta)}$ by $A_{s_1,s_2,G}$.

Fix $s_1,s_2\in (0,1)$ with $s_1<s_2$.
We define a map $f\colon W_{a,b}\to \R^2$ by 
\[
f(w)=\Bigl(\inf_{t\in[0,s_1]}q(w(t)),\ \inf_{t\in[s_1,s_2]}q(w(t))\Bigr).
\]
We denote an open ball in $\R^2$ with its center at $0$ and radius $r$ by $B(r)$.
By the continuity of $f$, $A_{s_1,s_2,B(r)}$ is an open neighborhood of $A_{s_1,s_2,(0,0)}$ in $W_{a,b}$.

Take $\eps >0$ and $\lambda \in (0,1)$.
We choose a smooth function $g$ on $[0,\infty)$ such that
\[
g(t)=\left\{\begin{array}{lcl}
1 &&\text{for } t \in [0,\lambda \eps),\\
\dfrac{3\log (t/{\eps})}{\log \lambda}-1 &&\text{for } t\in (\lambda^{5/9}\eps , \lambda^{4/9}\eps),\\
0 &&\text{for }t\in (\eps, \infty),
\end{array}\right.
\]
and $3(t\log \lambda)^{-1}\le g'(t) \le 0$ for all $t\in(0,\infty)$.
We define a function $\zeta\colon {\R}^2\to\R$ by $\zeta(x,y)=\sqrt{x^2+y^2}$ and set
$\iota = g\circ \zeta $.
Since $\iota\circ f$ is a bounded $H_0$-Lipschitz continuous function, it belongs to ${\D}^{1,2}$ --- to ${\cF'}$ in particular --- by virtue of Theorem~\ref{th:ES}.
Moreover, $\iota\circ f=1$ on $A_{s_1,s_2,B(\lambda \eps)}$.
Denoting the gradient operator on $\R^2$ by $\nabla_{\R^2}$, we have
{\allowdisplaybreaks
\begin{align*}
{\cE'}(\iota\circ f,\iota\circ f)
&=\frac12\int_{\overline{W_{a,b}^\Om}} |\nabla (\iota\circ f)|^2_{H_0} \, d\mu_{a,b}\\
&=\frac12\int_{\overline{W_{a,b}^\Om}} |\langle (\nabla f)(w),(\nabla_{{\R}^2} \,\iota)(f(w)) \rangle_{{\R}^2}|^2_{H_0} \, \mu_{a,b}(dw)\\
&\le C_{12}\int_{\overline{W_{a,b}^\Om}}  |(\nabla_{\R^2} \,\iota)(f(w))|_{\R^2}^2\, \mu_{a,b}(dw)\\
&=C_{12}\int_{\{(x,y)\in \R^2 \mid x\geq 0 ,y\geq 0\}}|\nabla_{\R^2} \,\iota|_{\R^2}^2\, d(f_*(\mu_{a,b}|_{\overline{W_{a,b}^\Om}}))\\
&=:I_3.
\end{align*}}%
Here, $f_*(\mu_{a,b}|_{\overline{W_{a,b}^\Om}})$ denotes the image measure of $\mu_{a,b}|_{\overline{W_{a,b}^\Om}}$ by $f$.
In the second line, $\langle \cdot,\cdot \rangle_{{\R}^2}$ denotes a pairing between the elements in $H_0\otimes \R^2$ and in $\R^2$ and has values in $H_0$.
The inequality from the second line to the third follows from the fact that $f$ is $H_0$-Lipschitz continuous.
Now,
\begin{align*}
|\nabla_{\R^2} \,\iota|_{\R^2}^2
&=\left({\partial \iota}/{\partial x}\right)^2+\left({\partial \iota}/{\partial y}\right)^2	\\
&=\left(g'\circ \zeta(x,y)\right)^2\frac{x^2}{x^2+y^2}+\left(g'\circ \zeta(x,y)\right)^2\frac{y^2}{x^2+y^2}\\
&=\left(g'\circ \zeta(x,y)\right)^2.
\end{align*}
By considering $\xi=(\zeta\circ f)_*(\mu_{a,b}|_{\overline{W_{a,b}^\Om}})$, we obtain 
\[
	I_3=C_{12}\int_{0}^{\infty}g'(r)^2\,\xi(dr) \le 9C_{12}\int_{\lambda\eps}^{\eps}(\log \lambda)^{-2}r^{-2} \,\xi(dr)=:I_4.
\]
From Lemma~\ref{lem:A}, the Markov property of $(\om_t,\hat P_x)$ and Proposition~\ref{prop:path},
{\allowdisplaybreaks
\begin{align*}
\Xi(r)&:=\xi([0,r])
= (f_*(\mu_{a,b}|_{\overline{W_{a,b}^\Om}}))(\zeta^{-1}([0,r]))\\
&=\mu_{a,b}\left[ w\in \overline{W_{a,b}^\Om} \left|\ \inf_{t\in[0,s_1]}q(w(t))^2+\inf_{t\in[s_1,s_2]}q(w(t))^2 \le r^2\right. \right]\\
&\le\mu_{a,b}\left[ w\in \overline{W_{a,b}^\Om} \left|\ 0\le\inf_{t\in[0,s_1]}q(w(t))\le r,\ 0\le\inf_{t\in[s_1,s_2]}q(w(t)) \le r\right. \right]\\
&\le \hat P_a \left[\ 0\le \inf_{t\in[0,s_1]}q(\om_t)\le r,\  0\le \inf_{t\in[s_1,s_2]}q(\om_t)\le r \right]
\cdot \frac{p_{1-s_2}(b,b)}{p_1(a,b)}\\
&=C_{13}\hat E_a\left[1_{\{0\le \inf_{t\in[0,s_1]}q(\om_t)\le r\}}\hat P_{\om_{s_1}}\left[ 0\le \inf_{t\in[0,s_2-s_1]}q(\om_t)\le r  \right]\right]\\
&\le C_{14}(1+(s_2-s_1)^{-1/2})r \hat P_a\left[ 0 \le \inf_{t\in[0,s_1]}q(\om_t)\le r\right]\\
&\le  C_{15}(1+(s_2-s_1)^{-1/2})(1+s_1^{-1/2})r^2
=C_{16}r^2.	
\end{align*}%
}%
Thus, we obtain
%{\allowdisplaybreaks
\begin{align*}
I_4&=9C_{12}\int_{\lambda\eps}^{\eps}(\log{\lambda})^{-2} \frac{1}{r^2} \,d\Xi(r)\\
&=9C_{12}(\log{\lambda})^{-2}  \left\{\left[\frac{\Xi(r)}{r^2}\right]_{\lambda\eps}^{\eps}+\int_{\lambda\eps}^{\eps}\frac{2\Xi(r)}{r^3} \,dr\right\}\\
&\le C_{17}(\log{\lambda})^{-2}  \Bigl(1+\int_{\lambda\eps}^{\eps}\frac{1}{r}  \,dr\Bigr)\\
&=C_{17}(\log{\lambda})^{-2} (1-\log {\lambda}).
\end{align*}
%}%
Therefore,
\begin{align*}
\cp(A_{s_1,s_2,(0,0)})
&\le \cp(A_{s_1,s_2,B(\lambda \eps)})\\
&\le \cE'(\iota\circ f,\iota\circ f) + \|\iota\circ f\|^2_{L^2({\overline{W_{a,b}^\Om}},\mu|_{\overline{W_{a,b}^\Om}})}\\
&\le \cE'(\iota\circ f,\iota\circ f) +\Xi(\eps)\\
&\le  C_{17}(\log{\lambda})^{-2}  (1-\log {\lambda})+C_{16}\eps^2.
\end{align*}
By letting $\eps \to 0$ and $\lambda \to 0$, we obtain $\cp(A_{s_1,s_2,(0,0)})=0$.
Therefore,
\[
\cp(\partial W_{a,b}^\Om\setminus \partial' W_{a,b}^\Om)
\le\sum_{0<s_1<s_2<1,\,s_1,s_2\in{\Q}}\cp(A_{s_1,s_2,(0,0)})=0.
\]
The last claim follows from the above result and Theorem~\ref{th:3}, and the fact that $\nu$ is a smooth measure.
This completes the proof of Theorem~\ref{th:main}.

\section{Concluding remarks}
\begin{enumerate}
\item  In a similar and simpler way, we can prove the counterpart of Theorem~\ref{th:main} that concerns the one-sided pinned path space on $\Om$; this theorem was proved in \cite{U}.
  More precisely, we define the path spaces as follows:
%{\allowdisplaybreaks
\begin{align*}
W_{a}&=\{w\in C([0,1]\to\R^d)\mid w(0)=a\},\\
W_{a}^\Om&=\{w\in C([0,1]\to \Om)\mid w(0)=a\},\\
H&=\left\{h\in C([0,1]\to\R^d)\left|\,
\begin{array}{l} 
h(0)=0,\ \text{$h$ is abslutely continuous}\\ 
\text{and } \int_0^1 |\dot h(s)|_{\R^d}^2\,ds<\infty
\end{array}\right.\right\},\\
  \partial W_{a}^\Om&=\left\{w\in W_{a}\left|\,
  \begin{array}{l}
  w(t)\in\overline{\Om}\text{ for every $t\in[0,1]$ and}\\
  w(s)\in\partial \Om\text{ for some $s\in(0,1]$}
  \end{array}
  \right.\right\},\\
  \partial' W_{a}^\Om&=\left\{w\in \partial W_{a}^\Om\left|\,
  \begin{array}{l}
  \text{there exists a unique }s\in(0,1]\\
   \text{such that } w(s)\in\partial\Om
   \end{array}\right.\right\},\\
  \overline{W_{a}^\Om}&=W_{a}^\Om\cup \partial W_{a}^\Om.
\end{align*}
%}%
Let $\mu_a$ be the probability measure on $W_a$ that is the law of the $d$-\hspace{0pt}dimensional Brownian motion starting at $a$.
Then, we can prove the claim that is modified by replacing $W_{a,b}$, $W_{a,b}^\Om$, $\mu_{a,b}$, $W_{0,0}$, $\mu_{0,0}$, $H_0$, and $\lm_{a,b}$ in Theorem~\ref{th:main} by $W_a$, $W_a^\Om$, $\mu_a$, $W_0$, $\mu_0$, $H$, and $\lm_a\colon W_a\ni w\mapsto w-a\in W_0$, respectively.
Also, from \cite[Theorem~4.4]{Hi}, $BV(W_0)\cap \bigcap_{q>1}L^q(W_0,\mu_0)\subset \D^{\alpha,p}$ if $p>1$ and $\alpha p<1$, where $\D^{\alpha,p}$ is a Sobolev space on $W_0$ with differentiability index $\alpha$ and integrability index $p$ according to the Malliavin calculus.
Therefore, we obtain the following theorem, which is a generalization of a part of the results of \cite{AZ}.
\begin{thm}
Assume that $0\in\Om$ and $\Om$ satisfies the uniform exterior ball condition. 
Then, $\mu_0(\partial W_0^\Om)=0$ and $1_{W_0^\Om}\in BV(W_0)$.
Moreover, for any real numbers $\alpha$ and $p$ such that $p>1$ and $\alpha p<1$, 
the function $1_{W_0^\Om}$ belongs to $\D^{\alpha,p}$.
\end{thm}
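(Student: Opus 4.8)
The plan is to mirror the proof of Theorem~\ref{th:main} from Section~4, while exploiting the fact that the one-sided setting is genuinely simpler. Because $a=0$, the space $(W_0,H,\mu_0)$ is already the classical Wiener space: it is linear, and $\mu_0$ coincides with the law of $\{\om_t\}_{t\in[0,1]}$ under $\hat P_0$. Hence the BV framework, the chain rule for $\nabla$, and Theorem~\ref{th:ES} all apply on $W_0$ with no affine transfer via $\lm_a$. Crucially, since the terminal point is free, the pinning machinery of Section~3 (Lemmas~\ref{lem:A}--\ref{lem:C} and Proposition~\ref{prop:2}) is never invoked: the one-dimensional control needed is supplied \emph{directly} by Proposition~\ref{prop:path} at $x=0$ (legitimate since $0\in\Om$) and $u=1$, giving
\[
\mu_0\!\left[\,0\le\inf_{t\in[0,1]}q(w(t))\le r\,\right]\le 2C_6\,r,\qquad r>0.
\]

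First I would establish $\mu_0(\partial W_0^\Om)=0$. Since $\overline\Om=\{q\ge0\}$ and $\partial\Om=\{q=0\}$, every $w\in\partial W_0^\Om$ satisfies $q(w(t))\ge0$ for all $t$ and $q(w(s))=0$ for some $s$, so that $\inf_{t\in[0,1]}q(w(t))=0$; thus $\partial W_0^\Om\subset\{\,0\le\inf_t q(w(t))\le r\,\}$ for every $r>0$. Applying the displayed bound and letting $r\to0$ yields $\mu_0(\partial W_0^\Om)=0$.

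Next I would prove $1_{W_0^\Om}\in BV(W_0)$ by repeating the approximation of Section~4. Setting $F(w)=\inf_{t\in[0,1]}q(w(t))$ and $\rho_n=f_n\circ F$ with $f_n(s)=\min\{\max\{0,ns\},1\}$, one has $W_0^\Om=\{F>0\}$, so $\rho_n\to 1_{W_0^\Om}$ pointwise and, by dominated convergence, in $L^1(W_0,\mu_0)$ (the previous step shows $\{F=0\}$ is null, consistent with $1_{W_0^\Om}=1_{\overline{W_0^\Om}}$ a.e.). As $q$ is $1$-Lipschitz, $F$ is $H$-Lipschitz, so Theorem~\ref{th:ES} gives $F\in\D^{1,2}$ with $|\nabla F|_H\le1$ $\mu_0$-a.e.; the chain rule then yields $\|\nabla\rho_n\|_{L^1(W_0\to H)}\le n\,\mu_0[\,0\le F\le 1/n\,]\le 2C_6$, uniformly in $n$. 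By Theorem~\ref{th:1} this gives $1_{W_0^\Om}\in BV(W_0)$.

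Finally, the Sobolev regularity is immediate: $1_{W_0^\Om}$ is bounded, hence lies in $\bigcap_{q>1}L^q(W_0,\mu_0)$, and combining this with $1_{W_0^\Om}\in BV(W_0)$ and the embedding $BV(W_0)\cap\bigcap_{q>1}L^q(W_0,\mu_0)\subset\D^{\alpha,p}$ of \cite[Theorem~4.4]{Hi} yields $1_{W_0^\Om}\in\D^{\alpha,p}$ for all $p>1$ with $\alpha p<1$. I do not expect a substantive obstacle here; the only point needing a moment's care is precisely the simplification itself---verifying that, with the endpoint unconstrained, the non-pinned estimate of Proposition~\ref{prop:path} already controls $\|\nabla\rho_n\|_{L^1}$, so that no analogue of the pinned Proposition~\ref{prop:2} (and none of Lemmas~\ref{lem:A}--\ref{lem:C}) is required. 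The remaining assertions of the full counterpart of Theorem~\ref{th:main} (closability, the Skorokhod representation, and the capacity estimate) follow by the same route as in Section~4 but are not part of the present statement.
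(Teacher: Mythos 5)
Your proposal is correct and takes essentially the same route as the paper, which proves this theorem by the ``similar and simpler'' one-sided version of the Section~4 argument (details deferred to \cite{U}): Proposition~\ref{prop:path} applied at $x=0\in\Om$ and $u=1$ replaces the pinned estimate of Proposition~\ref{prop:2}, so that Lemmas~\ref{lem:A}--\ref{lem:C} are indeed unnecessary, and the $\D^{\alpha,p}$ statement follows from the embedding $BV(W_0)\cap\bigcap_{q>1}L^q(W_0,\mu_0)\subset\D^{\alpha,p}$ of \cite[Theorem~4.4]{Hi}. You have merely written out the details the paper leaves implicit, including the correct a.e.\ identification of $1_{W_0^\Om}$ with $1_{\overline{W_0^\Om}}$ via $\mu_0(\partial W_0^\Om)=0$.
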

\item Precisely speaking, the diffusion process associated with $(\cE',\cF')$ should be called the {\em modified} reflecting Ornstein-Uhlenbeck process as in \cite{Fu,FH}, since $\cF'$ is defined as the completion of $\cF C_b^1$ and may be strictly smaller than the ``canonical'' first order $L^2$-Sobolev space $H^1(W_{a,b}^\Om)$.
When $\cF'$ is equal to $H^1(W_{a,b}^\Om)$ remains an open problem;
a partial answer is provided in \cite{Hi0}.
\end{enumerate}
%%%%%%%%%%%%%%%%%%%%%%%%%%%%%%%%%%%%%%%%%%%%%%%%%%

\end{document}